\theoremstyle{plain}
\newtheorem{thm}{Theorem}[section]
\newtheorem{prop}[thm]{Proposition}
\newtheorem{cor}[thm]{Corollary}
\newtheorem{lem}[thm]{Lemma}
\newtheorem{conj}[thm]{Conjecture}
\theoremstyle{definition}
\theoremstyle{remark}
\newtheorem*{thma}{{\bf Theorem A}}
\newtheorem*{examplea}{{\bf Example A}}
\newtheorem*{propa}{{\bf Proposition A}}
\newcommand{\C}{\mathcal{C}}
\newcommand{\W}{{\mathcal{W}}}
\newcommand{\CC}{{\mathbb{C}}}
\newcommand{\OO}{{\mathcal{O}}}
\newcommand{\Q}{\mathbb{Q}}
\newcommand{\LL}{\mathcal{L}}
\newcommand{\R}{\mathbb{R}}
\newcommand{\Z}{\mathbb{Z}}
\newcommand{\HH}{\mathcal{H}}
\newcommand{\TT}{\mathcal{T}}
\author{Dipramit Majumdar}
\title{Geometry of the eigencurve at critical Eisenstein series of weight 2 }
\date{}
\begin{document}
\maketitle

\begin{abstract}
In this paper we show that the critical Eisenstein series of weight 2, $E_{2}^{crit_{p}}$, is smooth in the eigencurve $\C(l)$, where $l$ is a prime. We also show that $E_{2}^{crit_{p},ord_{l}}$ is smooth in the full eigencurve $\C^{full}(l)$ and $E_{2}^{crit_{p},ord_{l_{1}},ord_{l_{2}}}$ is non-smooth in the full eigencurve $\C^{full}(l_{1}l_{2})$. Further, we show that, $E_{2}^{crit_{p}}$, is \'etale over the weight space in the eigencurve $\C(l)$. As a consequence, we show that level lowering conjecture of Paulin fails to hold at $E_{2}^{crit_{p},ord_{l}}$. 
\end{abstract}
\tableofcontents

\section{Introduction}
\label{intro}
Coleman and Mazur \cite{Coleman.1998} introduced a rigid analytic curve $\C$ parametrizing finite slope overconvergent $p$-adic eigenforms of tame level $1$, called eigencurve. This was axiomatized and generalized by Buzzard \cite{Buzzard.2007} to all levels. Buzzard's eigenvariety machine feeds in a family of $p$-adic Banach spaces\;(like space of overconvergent $p$-adic modular forms), and gets out geometric object called eigenvariety.\\
 
It was known due to work of Hida \cite{MR848685} that ordinary classical points of weight greater or equal to two in the eigencurve are smooth and  \'etale over the weight space. Coleman and Mazur \cite{Coleman.1998} showed for tame level 1, and  Kisin \cite{MR1992017} for arbitrary tame level eigencurve, that the non-critical classical points are smooth and \'etale over the weight space, provided $\alpha \neq \beta$\;(which is conjectured and known for $k=2$). For $k>2$, Bella\"\i che and Chenevier \cite{Bellaiche.2006b} proved that the critical Eisenstein series $E_{k}^{crit_{p}}$ is a smooth point which is  \'etale over the weight space.  On the other hand, for the weight 1 case, it is shown by the work of Bella\"\i che and Dimitrov  \cite{Joel-Dimitrov}, that at regular points the eigencurve is smooth over the weight space. By the work of Dimitrov and Ghate \cite{Dimitrov-Ghate} it is known that non-regular weight 1 modular forms can be non-smooth over the weight space. Another possible example of non-smooth classical point of weight greater or equal to two is the critical Eisenstein series of weight two. In this paper, we study the geometry of the eigencurve at point corresponding to critical Eisenstein series of weight $2$. For a brief overview of eigencurve and critical Eisenstein series of weight $2$, we refer reader to the beginning of section $2$.\\

 \begin{thma}
Let $\ell \neq p$ a prime. Eigencurve of tame level $\ell$, $\C(\ell)$ is smooth and \'etale over the weight space at $E_{2}^{crit_{p}}$.\\
\end{thma}
This is a combination of Theorem \ref{E2smatl} and Theorem \ref{E2etale}. Our approach to prove $\C(\ell)$ is smooth at $E_{2}^{crit_{p}}$ is similar to the method of Bellaiche and Chenevier as in \cite{Bellaiche.2006b}, but in this situation we have a strict inclusion of selmer group $H_{f}^{1} \subset H^1$ and one has to verify that the extension coming from Ribet's lemma is crystalline. In order to show that in $\C(\ell)$, $E_{2}^{crit_{p}}$ is \'etale over the weight space, using tree structure of $GL_{2}$, we prove that the reducibility locus at $p$ of the pseudo-character $\TT|_{G_{\Q_{p}}}$ is the maximal ideal.\\

The paper will also prove a statement regarding smoothness, analogous to Theorem A for a nontrivial Dirichlet character $\chi$ of prime conductor. We also prove the following result regarding the smoothness of $E_{2}^{crit_{p},ord_{\ell}}$ in the full eigencurve and full cuspidal eigencurve.

\begin{propa} 
The point $E_{2}^{crit_{p},ord_{\ell}}$ is smooth in the full eigencurve of tame level $\ell$, $\C^{full}(\ell)$ and the full cuspidal eigencurve of tame level $\ell$, $\C^{0,full}(\ell)$.\\
\end{propa}

This is Corollary \ref{E2fullsmatl}. We show that the map $f: \C^{full}(\ell) \to \C(\ell)$ is locally isomorphism at point $E_{2}^{crit_{p}}$. Since $E_{2}^{crit_{p}}$ is smooth in $\C(\ell)$, the result follows. As a consequence (Corollary \ref{nonsmooth}), we find an example of non-smooth classical point of weight $2$ in the full eigencurve $\C^{full}(\ell_{1}\ell_{2})$. 

\begin{examplea}
Let $\ell_{1},\ell_{2} \neq p$ be two distinct primes. The point corresponding to $E_{2}^{crit_{p},ord_{\ell_{1}},ord_{\ell_{2}}}$ is non-smooth in the full eigencurve $\C^{full}(\ell_{1}\ell_{2})$ of tame level $\ell_{1}\ell_{2}$.
\end{examplea}

Paulin made a conjecture regarding non-smooth points in the full cuspidal eigencurve \cite{Paulin.2012}. Level lowering conjecture of Paulin predicts that there are two components of $\C^{0,full}(\ell)$, one generically special and one generically principal series passing through the point $E_{2}^{crit_{p},ord_{\ell}}$. Since the eigencurve $\C^{0,full}(\ell)$, is smooth at $E_{2}^{crit_{p},ord_{\ell}}$, we see that the level lowering conjecture of Paulin fails to hold at $E_{2}^{crit_{p},ord_{\ell}}$.

\section{Smoothness of eigencurve at critical weight $2$ Eisenstein series}
\label{sec:1}

Let us fix an integer $N\geq 1$, and a prime $p$ such that $p \nmid N$. We shall work with a subgroup $\Gamma$ of $SL_{2}(\Z)$ defined as $\Gamma = \Gamma_{0}(N) \cap \Gamma_{0}(p)$. For an integer $k \geq 2$, we shall denote by $M_{k}(\Gamma)$\;(resp. $S_{k}(\Gamma), M_{k}^{\dagger}(\Gamma),S_{k}^{\dagger}(\Gamma)$) the $\Q_{p}$-space of classical modular forms\;(resp. cuspidal classical modular forms, resp. overconvergent $p$-adic modular forms, resp. overconvergent cuspidal $p$-adic modular forms) of level $\Gamma$ and weight $k$. These spaces are acted upon by the Hecke operators $T_{l}$ for $l$ prime to $Np$, the Atkin-Lehner operator $U_{p}$ and Atkin-Lehner operators $U_{l}$ for $l \mid N$. Let $\HH$ and $\HH^{full}$ be commutative polynomial algebras over $\Z$ defined as,
$$ \HH = \Z[ (T_{l})_{l \nmid Np}, U_{p}] \text{ and } \HH^{full} = \Z[ (T_{l})_{l \nmid Np}, U_{p}, (U_{l})_{l \mid N}].$$
The spaces $M_{k}(\Gamma), S_{k}(\Gamma), M_{k}^{\dagger}(\Gamma),S_{k}^{\dagger}(\Gamma)$ are acted upon by both $\HH$ and $\HH^{full}$. Let $\C(N)$\;(resp. $\C^{full}(N)$, resp. $\C^{0,full}(N)$) denote the eigencurve (resp. full eigencurve, resp. full cuspidal eigencurve) of tame level $\Gamma_{0}(N)$ obtained via Buzzard's eigenvariety machine with Hecke algebra $\HH$\;(resp. $\HH^{full}$, resp. $\HH^{full}$) acting on the space of overconvergent modular forms\;(resp. overconvergent modular forms, resp. overconvergent cuspidal modular forms) of level $\Gamma$. Let $f$ be a normalized modular newform of weight $k$ and level $\Gamma_{0}(N)$. Let $T_{p}(f) = a_{p}f$ and let $\alpha$ and $\beta$ be two roots of the equation $x^{2}-a_{p}x+p^{k-1}=0$. Define $f_{\alpha}$ and $f_{\beta}$ as $f_{\alpha}(z)=f(z)-\beta f(pz)$ and $f_{\beta}(z)= f(z)-\alpha f(pz)$, then $f_{\alpha}$ and $f_{\beta}$ are normalized modular form of weight $k$ and level $\Gamma$, and they appear in the eigencurve $\C(N)$ and $\C^{full}(N)$. Similarly, modular forms in $M_{k}(\Gamma)$ which are also in $S_{k}^{\dagger}(\Gamma)$ appear in the eigencurve $\C^{0,full}(N)$. A modular form $f$ of weight $k$, which appears in the eigencurve $\C(N)$\;(resp. $C^{full}(N)$, resp. $C^{0,full}(N)$) can be viewed as a system of $\HH$\;(resp. $\HH^{full}$, resp. $\HH^{full}$)-eigenvalues appearing in $M_{k}^{\dagger}(\Gamma)$\;(resp. $M_{k}^{\dagger}(\Gamma)$, resp. $S_{k}^{\dagger}(\Gamma)$). From the construction of the eigencurve, we have, $\C^{full}(N) \twoheadrightarrow \C(N)$, $\C^{0,full}(N) \hookrightarrow \C^{full}(N)$ and if $N_{1}\mid N_{2}$ then $\C(N_{1}) \hookrightarrow \C(N_{2})$.\\

Now let us recall some well known facts about Eisenstein series of weight $2$ and its appearance in the eigencurve. Let $\chi$ and $\psi$ be two primitive Dirichlet characters with conductor $L$ and $R$ respectively and let $k$ be an integer such that $\chi(-1)\psi(-1)=(-1)^{k}$. Let $E_{k,\chi,\psi}(q)$ denotes the formal power series  
$$E_{k,\chi,\psi}(q) = c_{0}+ \sum_{m \geq 1} (\sum_{n \mid m} \psi(n)\chi(\frac{m}{n}) n^{k-1}) q^{m},$$
where, $ c_{0} = 0 $ if $L>1$ and $c_{0}= \frac{-B_{k,\psi}}{2k}$ if $L=1$, where $B_{k,\psi}$ is the generalized Bernouli number attached $\psi$ defined by the identity,
$$ \sum_{a=1}^{R} \frac{\psi(a)xe^{ax}}{e^{Rx}-1} = \sum_{k=0}^{\infty} B_{k,\psi} \frac{x^{k}}{k!}.$$
Except when $k=2$ and $\chi=\psi=1$, the power series $E_{k,\chi,\psi}(q^{t})$ defines an element of $M_{k}(RLt,\chi\psi)$. When $k=2$ and $\chi=\psi=1$, let us denote by $E_{2}(q) = E_{2,1,1}(q)$, then for $t>1$, $E_{2}(q)-tE_{2}(q^{t})$ is a modular form in $M_{2}(\Gamma_{0}(t))$\;(see \cite{Miyake},\cite{Stein}). Note that, for any prime $p$, the coefficient of $q^{p}$ in $E_{k,\chi,\psi}(q)$ is given by $a_{p} = \chi(p)+\psi(p)p^{k-1}$. Let $(k,\chi,\psi) \neq (2,1,1)$, that is, $E_{k,\chi,\psi}(q)$ is a modular form of weight $k$ and level $\Gamma_{1}(RL)$, then define,
$$ E_{k,\chi,\psi}^{ord_{p}}(q) := E_{k,\chi,\psi}(q)- \psi(p)p^{k-1}E_{k,\chi,\psi}(q^{p}) = E_{k,\chi,\psi}(z)- \psi(p)p^{k-1}E_{k,\chi,\psi}(pz) \text{ and }$$
$$ E_{k,\chi,\psi}^{crit_{p}}(q) := E_{k,\chi,\psi}(q)- \chi(p)E_{k,\chi,\psi}(q^{p}) = E_{k,\chi,\psi}(z)- \chi(p)E_{k,\chi,\psi}(pz).$$
$E_{k,\chi,\psi}^{ord_{p}}(q)$ and $E_{k,\chi,\psi}^{crit_{p}}$ are called ordinary and critical refinement at $p$ of $E_{k,\chi,\psi}(q)$, and are modular forms of weight $k$ level $\Gamma_{1}(RL)\cap \Gamma_{0}(p)$. In the case when $k=2$ and $\chi=\psi=1$, we can similarly define ordinary and critical refinement of the power series $E_{2}(q)$ by the formula,
$$E_{2}^{ord_{p}}(q)= E_{2}(q)-pE_{2}(q^{p}) \text{ and }$$
$$E_{2}^{crit_{p}}(q)= E_{2}(q)-E_{2}(q^{p}).$$
We see that $E_{2}^{ord_{p}}(q)$ is a modular form appearing in $M_{2}(\Gamma_{0}(p))$. It is apparent that $E_{2}^{crit_{p}}(q)$ is not a classical modular form since $E_{2}(q)$ is not a classical modular form and is a linear combination of $E_{2}^{ord_{p}}(q)$ and $E_{2}^{crit_{p}}(q)$. It is well known that for any prime $p$, $E_{2}(q)$ is $q$-expansion of a $p$-adic modular form of weight $2$ and level $1$\;(one can find explicit sequence of classical modular forms whose $q$ expansion converge to $E_{2}(q)$). It is natural to ask, whether $E_{2}(q)$ (and $E_{2}^{crit_{p}}(q)$) is an overconvergent $p$-adic modular form. For $p=2,3$ due to work of Koblitz \cite{Koblitz} and for $p \geq 5$ due to work of Coleman,Gouv{\^e}a and Jochnowitz \cite{Gouvea95}, it is known that $E_{2}(q)$(and $E_{2}^{crit_{p}}(q)$) is not an overconvergent $p$-adic modular form. This explains why $E_{2}^{crit_{p}}(q)$ does not appear in eigencurves $\C(1),\C^{full}(1)$ and $\C^{0,full}(1)$. Let $\ell \neq p$ be a prime, then $E_{2}^{ord_{\ell}}(q) := E_{2}(q)- \ell E_{2}(q^{\ell})$ is a modular form of weight $2$ and level $\Gamma_{0}(\ell)$. Critical refinement at $p$ of $E_{2}^{ord_{\ell}}(q)$, defined as, $E_{2}^{crit_{p},ord_{\ell}}(q):= E_{2}^{ord_{\ell}}(q)-E_{2}^{ord_{\ell}}(q^p)$ is a classical modular form of weight $2$ and level $\Gamma_{0}(\ell) \cap \Gamma_{0}(p)$. In fact, $E_{2}^{crit_{p},ord_{\ell}}(q)$ appears in $S_{2}^{\dagger}(\Gamma_{0}(\ell) \cap \Gamma_{0}(p))$ \cite[Cor 2.5]{Bellaiche.2012}. We remark that $E_{2}^{crit_{p},crit_{\ell}}(q)$ is not an overconvergent modular form, because otherwise, $E_{2}^{crit_{p}}(q)$, which is a linear combination of $E_{2}^{crit_{p},ord_{\ell}}(q)$ and $E_{2}^{crit_{p},crit_{\ell}}(q)$ would be a overconvergent modular form. This shows that $E_{2}^{crit_{p},ord_{\ell}}(q)$ appears as a point in the eigencurve $\C^{0,full}(\ell)$ and hence in $\C^{full}(\ell)$ for any prime $\ell \neq p$, which we will denote by $E_{2}^{crit_{p},ord_{\ell}}$. Since $E_{2}^{crit_{p},ord_{\ell}}$ appears in the eigencurve $\C^{0,full}(\ell)$, it is clear that any component containing $E_{2}^{crit_{p},ord_{\ell}}$ would have infinitely many cuspidal modular forms near it. In $\C^{0,full}(\ell)$ (and in $\C^{full}(\ell)$) $E_{2}^{crit_{p},ord_{\ell}}$ corresponds to the system of $\HH^{full}$-eigenvalues given by $\{ (a_{l^{\prime}} = 1+ l^{\prime})_{l^{\prime} \neq \ell,p}, a_{\ell} = 1, a_{p} = p \}$. Let us by abuse of notation, call  $E_{2}^{crit_{p}}$ the image of $E_{2}^{crit_{p},ord_{\ell}}$ under the map $\C^{full}(\ell)  \to \C(\ell)$. In $\C(\ell)$, $E_{2}^{crit_{p}}$ corresponds to the system of $\HH$-eigenvalues given by $\{ (a_{l^{\prime}} = 1+ l^{\prime})_{l^{\prime} \neq \ell,p}, a_{p} = p \}$. By analogy, in $\C(N)$, with $N>1$, we denote by $E_{2}^{crit_{p}}$ the system of $\HH$-eigenvalue given by $\{ (a_{l} = 1+l)_{l \nmid Np}, a_{p}=p \}$.\\

$\C(\ell)$ (or $\C^{full}(\ell)$, or $\C^{0,full}(\ell)$) naturally comes equipped with a universal pseudocharacter of dimension $2$, $\TT: G_{\Q,\ell p}  \to \OO(\C(\ell))$\;(or $\TT: G_{\Q,\ell p}  \to \OO(\C^{full}(\ell))$, or  $\TT: G_{\Q,\ell p}  \to \OO(\C^{0,full}(\ell))$ respectively). If $x \in \C(\ell)$\;(or in $\C^{full}(\ell)$, or in $\C^{0,full}(\ell)$) is a point, we denote the localization at $x$ by $\TT_{x} : G_{\Q,\ell p}  \to \OO_{x}$. Let $m$ be the maximal ideal of $A= \OO_{x}$, $k = A/m$ and $G = G_{\Q,\ell p}$. Let us assume that $x \in \C(\ell)$ is such that $\bar{\TT}_{x} := \TT \otimes A/m : k[G] \to k$ is sum of two distinct characters, $\bar{\TT}_{x} = \chi_{1} + \chi_{2}$. Then by \cite[Theorem 1.4.4]{Bellaiche.2009}, we have 
\begin{equation*}
R= A[G]/Ker \TT_{x} \cong \begin{bmatrix}A & B\\C & A\end{bmatrix}
\end{equation*}
where $B,C$ are two $A$-modules and we have an $A$-bilinear map $\phi: B \times C \to A$. Let us denote the image $\phi(B \times C)$ by $BC$; it is a proper ideal of $A$. $J=BC$ is the smallest ideal $I$ of $A$ such that $\TT_{A/I}:= \TT_{x} \otimes A/I$ is reducible\cite[Proposition 1.5.1]{Bellaiche.2009}. It is called the {\it ideal of total reducibility}. It follows from the proof of Proposition 1.5.1 in \cite{Bellaiche.2009} that $J \neq 0$ if $\TT_{x}$ is not sum of two characters whose reduction modulo $m$ are $\chi_{1}$ and $\chi_{2}$. Moreover we have,
\begin{prop}\cite[Theorem 1.5.5]{Bellaiche.2009}\label{B,C inj}
There exists injective natural maps of $k$-vector spaces
\begin{align*}
 i_{B}:(B/mB)^{\vee} &\hookrightarrow Ext_{G,cts}^{1}( \chi_{1},\chi_{2}) \\
 i_{C}:(C/mC)^{\vee} &\hookrightarrow Ext_{G,cts}^{1}(\chi_{2},\chi_{1})
\end{align*}
\end{prop}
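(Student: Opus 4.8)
The plan is to build $i_B$ and $i_C$ directly out of the off--diagonal coordinates of the tautological representation attached to the generalized matrix algebra $R$, and to deduce injectivity from the fact that these coordinates generate $B$ and $C$ over $A$. Since the two statements are interchanged by swapping the two idempotents of $R$, it suffices to treat $i_B$. Write $\rho\colon G\to R^{\times}$ for the representation $g\mapsto\rho(g)=\begin{bmatrix}a(g)&b(g)\\c(g)&d(g)\end{bmatrix}$ induced by the quotient map $A[G]\to R$, so that $a(g),d(g)\in A$, $b(g)\in B$, $c(g)\in C$, the diagonal entries reduce modulo $m$ to the two characters (say $a(g)\equiv\chi_2(g)$ and $d(g)\equiv\chi_1(g)$), while $b(g)c(h)\in BC=J\subseteq m$. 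Comparing $(1,2)$--entries in $\rho(gh)=\rho(g)\rho(h)$ gives $b(gh)=a(g)b(h)+b(g)d(h)$; reducing modulo $mB$ and writing $\bar b(g)$ for the image of $b(g)$ in $B/mB$ yields
\[
\bar b(gh)=\chi_2(g)\,\bar b(h)+\chi_1(h)\,\bar b(g).
\]
Setting $\check b(g)=\chi_1(g)^{-1}\bar b(g)$ turns this into the cocycle relation $\check b(gh)=\check b(g)+(\chi_2\chi_1^{-1})(g)\,\check b(h)$, so $\check b$ is a continuous $1$--cocycle valued in $B/mB$ twisted by $\chi_2\chi_1^{-1}$ (continuity being inherited from that of $\rho$). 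For $f\in (B/mB)^{\vee}=\mathrm{Hom}_k(B/mB,k)$ I would then set $i_B(f):=[\,f\circ\check b\,]\in H^1_{cts}(G,\chi_2\chi_1^{-1})=Ext^1_{G,cts}(\chi_1,\chi_2)$. This is manifestly $k$--linear in $f$; it is also well defined, because changing the chosen GMA structure by an inner automorphism of $R$ alters $\check b$ only by a coboundary, leaving the class unchanged. The map $i_C$ is defined symmetrically from $c(gh)=c(g)a(h)+d(g)c(h)$.

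The input needed for injectivity is that $B$ is generated as an $A$--module by $\{b(g)\}_{g\in G}$. This holds because $R=A[G]/\ker\TT_x$ is the $A$--linear span of $\{\rho(g)\}_{g\in G}$; reading off $(1,2)$--entries shows every element of $B$ is an $A$--combination of the $b(g)$. Reducing this relation modulo $mB$ shows that $\{\bar b(g)\}$ spans $B/mB$ over $k$. Crucially, this generation property is invariant under replacing $\rho$ by a conjugate $\rho'=P\rho P^{-1}$ with $P\in R^{\times}$, since then $R$ is equally the $A$--span of $\{\rho'(g)\}$; in particular it persists after conjugating by a unipotent $P=\begin{bmatrix}1&-u\\0&1\end{bmatrix}$ with $u\in B$.

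The heart of the argument, and the step I expect to be the main obstacle, is injectivity: spanning of the cochain values $\bar b(g)$ does not by itself control the cohomology class, so I must rule out a nonzero $f$ for which $f\circ\check b$ is a coboundary. Suppose $i_B(f)=0$ with $f\neq 0$; unwinding the twist, this means $f(\bar b(g))=(\chi_2(g)-\chi_1(g))\,s$ for all $g$ and some fixed $s\in k$. Choose $u\in B$ with $f(\bar u)=-s$ and conjugate $\rho$ by $P=\begin{bmatrix}1&-u\\0&1\end{bmatrix}$. A direct computation of the $(1,2)$--entry of $P\rho(g)P^{-1}$, in which the quadratic correction $u\,c(g)\,u$ dies modulo $mB$ precisely because $u\,c(g)\in BC\subseteq m$, gives
\[
\bar b'(g)\equiv \bar b(g)+(\chi_2(g)-\chi_1(g))\,\bar u \pmod{mB}.
\]
Applying $f$ yields $f(\bar b'(g))=(\chi_2(g)-\chi_1(g))s+(\chi_2(g)-\chi_1(g))(-s)=0$ for every $g$, so all the values $\bar b'(g)$ lie in the hyperplane $\ker f\subsetneq B/mB$. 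But by the conjugation--invariant generation property of the preceding paragraph the $\bar b'(g)$ span $B/mB$, a contradiction. Hence $f=0$ and $i_B$ is injective; the injectivity of $i_C$ follows by the symmetric argument. The delicate points to get right are the bookkeeping of the twisting characters, which fixes the direction of each $Ext^1_{G,cts}$ and is governed by the ordering of the idempotents, and the verification that inner conjugation by $P$ realizes exactly the coboundary $(\chi_2-\chi_1)\bar u$ modulo $mB$, for which the containment $BC\subseteq m$ is what makes the nonlinear term vanish.
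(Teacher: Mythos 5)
Your proposal is correct and follows essentially the same route as the source: the paper offers no proof of this proposition, quoting it from \cite[Theorem 1.5.5]{Bellaiche.2009}, and your argument --- the twisted cocycle $\check b$ extracted from the $(1,2)$-entries, the observation that the $b(g)$ span $B$ over $A$ because $R$ is the $A$-span of $\rho(G)$ (so that $B=e_1Re_2$ is spanned by the $(1,2)$-entries even after conjugation), and the conjugation by $\left[\begin{smallmatrix}1&-u\\0&1\end{smallmatrix}\right]$ that absorbs a putative coboundary, the quadratic term $uc(g)u$ dying precisely because $BC\subseteq m$ --- is exactly the Bella\"\i che--Chenevier proof of that theorem. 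Your convention bookkeeping (taking $a\equiv\chi_2$, $d\equiv\chi_1$ so that $i_B$ lands in $Ext_{G,cts}^{1}(\chi_1,\chi_2)$) is also consistent with the statement as the paper applies it.
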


An important ingredient in our proof will be a version of Kisin's lemma proved in \cite{Bellaiche.2006b}.
\begin{lem}\cite[Lemme 6]{Bellaiche.2006b}\label{kisin}
Let $I \subset m$ be any cofinite length ideal of $A=\OO_{x}$. Then\\
 $ D_{cris}(\rho_{|D} \otimes A/I)^{\phi= U_{p}}$ is free of rank 1 over $A/I$, here $D$ is the decomposition group at $p$.
\end{lem}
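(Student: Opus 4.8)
The plan is to deduce the statement from Kisin's interpolation of finite-slope crystalline periods together with a length count over the Artinian local ring $A/I$. Write $M_I := D_{cris}(\rho_{|D}\otimes A/I)^{\phi=U_p}$, where $U_p \in A^{\times}$ is the image of the Hecke eigenvalue; note $U_p \equiv p \pmod{m}$, so $U_p$ is a unit in $A$. Since $A/I$ has finite length and $D_{cris}$ of a finite-dimensional representation is finite-dimensional over the coefficient field, $M_I$ is a finitely generated $A/I$-module and has a well-defined length. The two ingredients I would assemble are a \emph{lower bound} (a free rank-one submodule of $M_I$, from Kisin) and an \emph{upper bound} ($\mathrm{length}_{A/I}M_I \le \mathrm{length}(A/I)$, from a d\'evissage); matching them forces freeness.

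For the lower bound I would invoke Kisin's theorem on the variation of crystalline periods over the eigencurve: because $x$ is a finite-slope overconvergent eigenform with $U_p$-eigenvalue $p$, one obtains a canonical $A$-linear period which, upon reduction modulo $I$, gives an injection $\eta_I\colon A/I \hookrightarrow M_I$, compatible with the transition maps in $I$ and nonzero modulo $m$. Concretely this exhibits a free rank-one $A/I$-submodule $\mathrm{im}(\eta_I)\subseteq M_I$ of length $\mathrm{length}(A/I)$ whose reduction spans a line in $M_m := D_{cris}(\bar\rho_{|D})^{\phi=p}$.

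For the upper bound I would induct on $\mathrm{length}(A/I)$, using left-exactness of the functor $V\mapsto D_{cris}(V)^{\phi=U_p}$ (the composite of the left-exact $D_{cris}=(\,-\otimes B_{cris})^{G_{\Q_p}}$ with the kernel of $\phi-U_p$). In the base case, at $E_{2}^{crit_{p}}$ the residual representation is an extension of $\mathbf{1}$ and $\chi_{cyc}$, whose crystalline Frobenius eigenvalues are $1$ and $p$, each of multiplicity one, so $\dim_k M_m \le 1$. For the inductive step I would pick $I\subset I'\subseteq m$ with $I'/I\cong k$ and tensor the short exact sequence $0\to I'/I\to A/I\to A/I'\to 0$ with the (free) underlying module of $\rho_{|D}$, obtaining $0\to \bar\rho_{|D}\to \rho_{|D}\otimes A/I\to \rho_{|D}\otimes A/I'\to 0$; applying the left-exact functor yields $0\to M_m\to M_I\to M_{I'}$, whence $\mathrm{length}_{A/I}M_I \le \dim_k M_m + \mathrm{length}_{A/I'}M_{I'} \le 1 + \mathrm{length}(A/I') = \mathrm{length}(A/I)$. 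Combining the two bounds, $\mathrm{im}(\eta_I)\subseteq M_I$ with $\mathrm{length}\,\mathrm{im}(\eta_I)=\mathrm{length}(A/I)\ge \mathrm{length}(M_I)$, so the inclusion is an equality and $M_I=\mathrm{im}(\eta_I)\cong A/I$ is free of rank one.

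The main obstacle is the lower bound: everything after Kisin's input is formal (length bookkeeping and left-exactness), but the injectivity of the interpolated period $\eta_I$ — that the finite-slope crystalline period genuinely varies freely in the family rather than collapsing at the critical point — is the substantive analytic point, and this is precisely where the overconvergence/finite-slope hypothesis and Kisin's construction enter. I would also double-check the Frobenius normalization, namely that $U_p=p$ is the eigenvalue cutting out the critical refinement, so that $M_m$ is one-dimensional and the base case of the induction is valid.
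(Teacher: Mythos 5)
First, a point of comparison: the paper does not prove this lemma at all --- it is imported verbatim from Bella\"{\i}che--Chenevier \cite{Bellaiche.2006b} (their Lemme 6), so the benchmark is their argument, and your two-bound strategy is in substance the same as the one there: a free rank-one submodule of $M_I := D_{cris}(\rho_{|D}\otimes A/I)^{\phi=U_p}$ coming from Kisin's interpolation of finite-slope crystalline periods, played against the length bound $\mathrm{length}_{A/I}\,M_I \le \mathrm{length}(A/I)$ obtained by d\'evissage through the left-exact functor $D_{cris}(-)^{\phi=U_p}$, with base case $\dim_k D_{cris}(\bar\rho_{|D})^{\phi=p}\le 1$ because the crystalline Frobenius eigenvalues of $1\oplus\omega_p^{-1}$ are $1$ and $p$ and $U_p=p\not\equiv 1$. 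Your upper bound, base case, and normalization ($U_p=p$ is indeed the critical refinement at $E_2^{crit_p}$, matching the eigenvalue $p$ on $D_{cris}(\omega_p^{-1})$) are all correct.

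The one genuine soft spot is exactly where you flag it, but it deserves to be named precisely, because it is not merely ``Kisin's input'': Kisin's theorem, in the form one can actually quote, yields a period over an affinoid neighborhood --- equivalently a compatible system $(v_I)$, $v_I\in M_I$, under the transition maps $M_I\to M_{I'}$, with nonzero image $\bar v\in M_m$ --- together with surjectivity of these specialization maps; it does \emph{not} assert that $\eta_I: a\mapsto a\,v_I$ is injective. Over an Artinian local ring, ``nonzero mod $m$'' does not imply injectivity, and note that surjectivity of $M_I\to M_{I'}$ plus your length count still leaves open the possibility $M_I\cong A/I'\oplus k$, which has the right length and specializes correctly but is not free. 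The gap is closed by a short supplement you should add: for $0\neq a\in A/I$ (lifted to $A$), multiplication by $a$ factors as $A/I\twoheadrightarrow A/(I\!:\!a)\xrightarrow{\,\cdot a\,} A/I$ with the second map injective and $(I\!:\!a)\supseteq I$ a proper cofinite ideal; tensoring with the free module underlying $\rho_{|D}$ and applying the left-exact functor gives $M_I\to M_{(I:a)}\xrightarrow{\iota} M_I$ with composite multiplication by $a$, where $\iota$ is injective, the first map is a transition map sending $v_I\mapsto v_{(I:a)}$, and $v_{(I:a)}\neq 0$ since it maps to $\bar v\neq 0$ in $M_m$. Hence $a\,v_I=\iota(v_{(I:a)})\neq 0$, so $\mathrm{Ann}(v_I)=0$ and $\eta_I$ is injective. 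With that patch your proposal is complete and agrees with the argument of the cited source.
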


Last key ingredient for the proof of smoothness results is dimension formula for Selmer groups. Let $V$ be a $p$-adic representation of $G_{\Q}$ unramified almost everywhere. Let $\LL = (\LL_v)$ be a Selmer structure for $V$, that is, a family of subspaces $\LL_{v}$ of $H^{1}(G_{v},V)$ for all finite places $v$ of $\Q$, such that for all $v \notin \Sigma$ we have $\LL_{v} = H^{1}_{unr}(G_{v},V)$, where $\Sigma$ is a finite set of finite places of $\Q$ containing $p$. The Selmer group attached to the Selmer structure $\LL$ is defined as 
$$H^{1}_{\LL}(G_{\Q},V) := \mathrm{ker} (H^{1}(G_{\Q,\Sigma},V) \to \prod_{v \in \Sigma} H^{1}(G_{v},V)/ \LL_{v} ).$$
We note that, if for all $v \in \Sigma$, $\LL_{v} = H^{1}(G_{v},V)$, then $H^{1}_{\LL}(G_{\Q},V) = H^{1}(G_{\Q,\Sigma},V)$. If $\LL$ is a Selmer structure for $V$, then we define a Selmer structure $\LL^{\perp}$ for $V^{\ast}(1)$ by taking $\LL_{v}^{\perp}$ to be the orthogonal of $\LL_{v}$ in $H^{1}(G_{v},V^{\ast}(1))$. The following proposition, which is a consequence of Poitou-Tate machinery, allows us to relate dimension of Selmer group and its dual.
\begin{prop}\label{Selmerformula}
\begin{align*}
dim\text{ } H_{\LL}^{1}(G_{\Q},V) &= dim\text{ } H_{\LL^{\perp}}^{1}(G_{\Q},V^{\ast}(1))\\ 
&+ dim\text{ } H^{0}(G_{\Q},V) - dim\text{ } H^{0}(G_{\Q},V^{\ast}(1))\\
&+ \sum_{v \text{ places of } \Q}(dim\text{ } \LL_{v}- dim\text{ } H^{0}(G_{v},V)).
\end{align*}
\end{prop}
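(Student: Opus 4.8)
The statement is the Greenberg--Wiles formula, and the plan is to derive it from the nine-term Poitou--Tate exact sequence together with local Tate duality and the local and global Euler--Poincar\'e characteristic formulas. Throughout write $\Sigma$ for the finite set of places (containing $p$ and $\infty$) outside which $V$ is unramified. First I would record two preliminary reductions: for $v \notin \Sigma$ one has $\dim \LL_v = \dim H^1_{unr}(G_v, V) = \dim H^0(G_v, V)$, so every summand on the right-hand side with $v \notin \Sigma$ vanishes and all sums may be taken over $\Sigma$; and $H^0(G_{\Q}, V) = H^0(G_{\Q,\Sigma}, V)$, similarly for $V^{\ast}(1)$.

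Next I would analyse the global-to-local restriction map $\beta \colon H^1(G_{\Q,\Sigma}, V) \to \bigoplus_{v \in \Sigma} H^1(G_v, V)/\LL_v$, whose kernel is by definition $H^1_{\LL}(G_{\Q}, V)$. The middle of the Poitou--Tate sequence identifies the image of the localization map for $V$ with the exact orthogonal complement, under the sum of the local Tate pairings, of the image of the localization map for $V^{\ast}(1)$; since $\LL_v$ and $\LL_v^{\perp}$ are orthogonal complements, a short diagram chase shows that the cokernel of $\beta$ is canonically dual to the image of $H^1_{\LL^{\perp}}(G_{\Q}, V^{\ast}(1))$ in $\bigoplus_{v}\LL_v^{\perp}$. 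As the kernel of that latter localization is $\mathrm{Sha}^1(V^{\ast}(1)) := \ker(H^1(G_{\Q,\Sigma}, V^{\ast}(1)) \to \bigoplus_v H^1(G_v, V^{\ast}(1)))$, this gives $\dim \mathrm{coker}\,\beta = \dim H^1_{\LL^{\perp}}(G_{\Q}, V^{\ast}(1)) - \dim \mathrm{Sha}^1(V^{\ast}(1))$. Counting dimensions in $0 \to H^1_{\LL} \to H^1(G_{\Q,\Sigma}, V) \xrightarrow{\beta} \bigoplus_v H^1(G_v,V)/\LL_v \to \mathrm{coker}\,\beta \to 0$ then yields
\[
\dim H^1_{\LL}(G_{\Q}, V) - \dim H^1_{\LL^{\perp}}(G_{\Q}, V^{\ast}(1)) = \dim H^1(G_{\Q,\Sigma}, V) - \dim \mathrm{Sha}^1(V^{\ast}(1)) - \sum_{v \in \Sigma}\bigl(\dim H^1(G_v, V) - \dim \LL_v\bigr).
\]

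It then remains to match this right-hand side with that of the proposition, and the plan is to substitute the Euler--Poincar\'e formulas. Summing the local Euler characteristic $\dim H^0(G_v,V) - \dim H^1(G_v,V) + \dim H^2(G_v,V)$ over $v \in \Sigma$ (which contributes $-\dim V$ at $v = p$, $\dim H^0(G_{\R}, V)$ at $v=\infty$, and $0$ at the remaining finite places) and comparing with the global Euler characteristic formula for $G_{\Q,\Sigma}$ lets me eliminate every $H^1$-term; after this reduction the desired identity becomes equivalent to
\[
\dim H^2(G_{\Q,\Sigma}, V) - \sum_{v\in\Sigma} \dim H^2(G_v, V) = \dim \mathrm{Sha}^1(V^{\ast}(1)) - \dim H^0(G_{\Q}, V^{\ast}(1)).
\]
Finally I would close the argument with the tail of the Poitou--Tate sequence, $0 \to \mathrm{Sha}^2(V) \to H^2(G_{\Q,\Sigma},V) \xrightarrow{\mathrm{loc}^2} \bigoplus_{v} H^2(G_v,V) \to H^0(G_{\Q,\Sigma}, V^{\ast}(1))^{\vee} \to 0$, where $\mathrm{Sha}^2(V) = \ker(\mathrm{loc}^2)$: its alternating sum of dimensions gives exactly the displayed identity after invoking local Tate duality $\dim H^2(G_v,V) = \dim H^0(G_v, V^{\ast}(1))$ and the global duality $\mathrm{Sha}^2(V) \cong \mathrm{Sha}^1(V^{\ast}(1))^{\vee}$. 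I expect the main obstacle to be precisely this bookkeeping of the second-cohomology and Shafarevich--Tate terms: one must verify that they cancel through the global duality and that the archimedean and $p$-adic Euler factors reorganize exactly into $\dim H^0(G_{\Q}, V) - \dim H^0(G_{\Q}, V^{\ast}(1))$ and $\sum_v(\dim \LL_v - \dim H^0(G_v, V))$ rather than leave stray contributions. (When $p=2$ the archimedean places require the usual modification by Tate cohomology, but in our applications $p$ is odd, so $H^i(G_{\R}, V)=0$ for $i>0$ and the real place contributes only through $\dim H^0(G_{\R},V)$.)
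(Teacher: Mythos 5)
Your proposal is correct, and it is essentially the paper's own approach in the only meaningful sense available: the paper gives no proof of Proposition~\ref{Selmerformula}, citing instead \cite[Theorem 8.7.9]{NSW} (Wiles) and \cite[Proposition 2.7]{JoelBK}, and the argument in those sources is exactly your route --- the nine-term Poitou--Tate sequence identifying $\mathrm{coker}\,\beta$ with the dual of the localization image of $H^{1}_{\LL^{\perp}}(G_{\Q},V^{\ast}(1))$, the local and global Euler--Poincar\'e formulas to eliminate the $H^{1}$-terms, and the tail of the sequence together with $\mathrm{Sha}^{2}(V)\cong \mathrm{Sha}^{1}(V^{\ast}(1))^{\vee}$ to cancel the $H^{2}$-terms (your bookkeeping, including the contribution $\dim H^{0}(G_{\R},V)$ at the real place for odd $p$, checks out). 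The one step you pass over silently is that Poitou--Tate and the Euler-characteristic formulas are theorems about finite modules, so for a $\Q_{p}$-representation $V$ one must first establish the identity for $T/p^{n}T$ with $T$ a stable lattice and then take limits and invert $p$ --- a routine reduction carried out in the cited references, which the paper itself acknowledges in attributing the result to Wiles ``rather its analog for finite coefficients.''
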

This formula\;(rather its analog for finite coefficients) is due to A.Wiles and can be found in \cite[Theorem 8.7.9]{NSW}, the version used here can be found in \cite[Proposition 2.7]{JoelBK}.\\

To show that the eigencurve $\C(\ell)$ is smooth at $x=E_{2}^{crit_{p}}$, suppose that $\mathrm{dim}_{k} (B/mB)^{\vee} = \mathrm{dim}_{k} (C/mC)^{\vee}  =1$. Then it was shown by Bella{\"i}che and Chenevier \cite[section 5.4]{Bellaiche.2006b}, that $J= BC$ is the maximal ideal $m$. The main ingredient of their proof was Kisin's lemma (\ref{kisin}).We will use this to prove the following theorem. Our proof closely follows the proof of Bellaiche and Chenevier as in \cite{Bellaiche.2006b}. We show that, in this situation, $(B/mB)^{\vee}$ actually injects into a selmer group, and we compute the dimension of the selmer group to conclude that $\mathrm{dim}_{k} (B/mB)^{\vee} = 1$.

\begin{thm}\label{E2smatl}
The eigencurve $\C(\ell)$, of tame level $\ell$ , is smooth at the critical Eisenstein series of weight 2, $E_{2}^{crit_{p}}$, where $\ell \neq p$ is a prime.
\end{thm}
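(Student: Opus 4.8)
The plan is to analyze the local ring $A=\OO_{x}$ at $x=E_{2}^{crit_{p}}$ through its generalized matrix algebra structure and to reduce smoothness to two Selmer group computations. Since the semisimple representation attached to $E_{2}$ is $1\oplus\chi_{cyc}$, with $\chi_{cyc}$ the $p$-adic cyclotomic character, the residual pseudocharacter is $\bar{\TT}_{x}=\chi_{1}+\chi_{2}$ with $\chi_{1}=1$ and $\chi_{2}=\chi_{cyc}$, which are distinct as $\chi_{cyc}\neq 1$. Applying \cite[Theorem 1.4.4]{Bellaiche.2009} yields $R\cong\begin{bmatrix}A&B\\C&A\end{bmatrix}$ with ideal of total reducibility $J=BC$. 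The key reduction is: if I can show $\dim_{k}(B/mB)^{\vee}=\dim_{k}(C/mC)^{\vee}=1$, then by Nakayama $B=Ab$ and $C=Ac$ are cyclic, so $J=BC=(\phi(b,c))$ is principal; the argument of Bella{\"i}che and Chenevier \cite[section 5.4]{Bellaiche.2006b}, whose engine is Kisin's lemma (Lemma \ref{kisin}), then gives $J=m$, so that $m=(\phi(b,c))$ is principal. Recalling that $\C(\ell)$ is equidimensional of dimension $1$ and that $x$ lies on a one-dimensional component, $\dim\OO_{x}=1$; a Noetherian local ring of Krull dimension $1$ with principal maximal ideal is regular, so $\OO_{x}$ is regular and $x$ is smooth. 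Everything thus reduces to the two dimension counts, and I would secure the lower bounds first: since $E_{2}^{crit_{p}}$ lies, as the image of $E_{2}^{crit_{p},ord_{\ell}}$, on a component carrying infinitely many cusp forms, $\TT_{x}$ is not a sum of two characters lifting $\chi_{1},\chi_{2}$, whence $J\neq 0$ and therefore $B,C\neq 0$, giving $\dim_{k}(B/mB)^{\vee}\geq 1$ and $\dim_{k}(C/mC)^{\vee}\geq 1$.

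For the upper bound on the $C$ side, Proposition \ref{B,C inj} embeds $(C/mC)^{\vee}$ into $Ext^{1}_{G,cts}(\chi_{2},\chi_{1})=H^{1}(G_{\Q,\ell p},\chi_{1}\chi_{2}^{-1})$, the cohomology of the Tate twist $\Q_{p}(-1)$. Here no local refinement is needed: I would compute $\dim_{k}H^{1}(G_{\Q,\ell p},\Q_{p}(-1))=1$ directly from Proposition \ref{Selmerformula}, using that $H^{0}(G_{\Q,\ell p},\Q_{p}(-1))=0$, that the local groups $H^{2}(\Q_{v},\Q_{p}(-1))$ vanish at $v=\ell,p$ (being dual to $H^{0}(\Q_{v},\Q_{p}(2))=0$), and that the corresponding dual Selmer group for $\Q_{p}(2)$ vanishes. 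Combined with $C\neq 0$ this already forces $\dim_{k}(C/mC)^{\vee}=1$.

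The delicate side is $B$, and this is where I expect the main obstacle to lie. Proposition \ref{B,C inj} embeds $(B/mB)^{\vee}$ into $Ext^{1}_{G,cts}(\chi_{1},\chi_{2})=H^{1}(G_{\Q,\ell p},\chi_{2}\chi_{1}^{-1})=H^{1}(G_{\Q,\ell p},\Q_{p}(1))$, but this full cohomology group is two-dimensional: this is exactly the weight-$2$ phenomenon, since $H^{1}_{f}(\Q_{p},\Q_{p}(1))\subsetneq H^{1}(\Q_{p},\Q_{p}(1))$ is a strict inclusion, whereas for the twists $\Q_{p}(k-1)$ with $k-1\geq 2$ occurring in higher weight the analogous local inclusion is an equality. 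Hence a naive count is insufficient, and the crucial step is to show that the image of $i_{B}$ lands in the Selmer group $H^{1}_{\LL}(\Q,\Q_{p}(1))$ cut out by the crystalline condition $\LL_{p}=H^{1}_{f}(\Q_{p},\Q_{p}(1))$ at $p$ and the full condition $\LL_{\ell}=H^{1}(\Q_{\ell},\Q_{p}(1))$ at $\ell$; equivalently, that the extension produced by Ribet's lemma is crystalline at $p$. To verify crystallinity I would invoke Kisin's lemma (Lemma \ref{kisin}): for each cofinite length ideal $I\subset m$, the rank-one freeness of $D_{cris}(\rho_{|D}\otimes A/I)^{\phi=U_{p}}$ produces a crystalline line in the local representation at $p$ realizing the chosen refinement $U_{p}=p$, and a comparison of Hodge--Tate weights and Frobenius eigenvalues then forces the extension class to lie in $H^{1}_{f}$. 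Granting this, Proposition \ref{Selmerformula} gives $\dim_{k}H^{1}_{\LL}(\Q,\Q_{p}(1))=1$: the local contributions $\dim\LL_{p}=\dim H^{1}_{f}(\Q_{p},\Q_{p}(1))=1$ and $\dim\LL_{\ell}=\dim H^{1}(\Q_{\ell},\Q_{p}(1))=1$ balance against the global term $-\dim H^{0}(G_{\Q,\ell p},\Q_{p})=-1$ and against the vanishing of the dual Selmer group $H^{1}_{\LL^{\perp}}(\Q,\Q_{p})=0$, whose only candidate class, coming from the cyclotomic $\Z_{p}$-extension, is ramified at $p$ and hence excluded by $\LL_{p}^{\perp}=H^{1}_{f}(\Q_{p},\Q_{p})$. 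Together with $B\neq 0$ this yields $\dim_{k}(B/mB)^{\vee}=1$, completing the reduction and hence the proof.
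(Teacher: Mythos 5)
Your proposal is correct and takes essentially the same route as the paper: the same generalized matrix algebra setup with $g(C)=1$ from a direct count of $H^{1}(G_{\Q,\ell p},\omega_{p}^{-1})$, the same use of Kisin's lemma to place the $B$-side extension in the Selmer group with $\LL_{p}=H^{1}_{f}$ and $\LL_{\ell}$ full (the paper makes your Hodge--Tate/Frobenius comparison explicit: $D_{cris}$ of the local extension contains eigenvectors of eigenvalue $1$, by left exactness, and of eigenvalue $p$, by Kisin's lemma with $I=m$, hence is two-dimensional and the extension is crystalline), the same one-dimensionality count $0+0-1+1+1+0=1$, and the same conclusion $J=BC=m$ via section 5.4 of Bella\"\i che--Chenevier, forcing $A$ to be a DVR. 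The only cosmetic differences are your arithmetic normalization $1+\chi_{cyc}$ in place of the paper's $1+\omega_{p}^{-1}$ (the resulting Ext groups coincide) and your Leopoldt-style argument for $H^{1}_{\LL^{\perp}}(G_{\Q},\Q_{p})=0$, where the paper simply invokes finiteness of the class group of $\Q$.
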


\begin{proof}
Let $x$ be the point $E_{2}^{crit_{p}}$ in $\C(\ell)$. Let $A=\OO_{x}$ be the local ring at $x$ and let $m$ be the maximal ideal and $k=A/m$. We have a pseudocharacter $\TT_{x}:G_{\Q,lp} \to A$, such that $\bar{\TT_{x}}:= \TT_{x} \otimes A/m = 1 + \omega_{p}^{-1}$, where $\omega_{p}$ is the $p$-adic cyclotomic character, by proposition \ref{B,C inj} we have,
\begin{align*}
 i_{B}:(B/mB)^{\vee} &\hookrightarrow Ext_{G_{\Q,lp},cts}^{1}( \omega_{p}^{-1},1) \cong H^{1}(G_{\Q,lp},\omega_{p}) \\
 i_{C}:(C/mC)^{\vee} &\hookrightarrow Ext_{G_{\Q,lp},cts}^{1}(1,\omega_{p}^{-1}) \cong H^{1}(G_{\Q,lp},\omega_{p}^{-1})
\end{align*}
Let denote by $g(B), g(C)$ the dimension over $k$ of $(B/mB)^{\vee}, (C/mC)^{\vee}$ respectively. We note that $g(B)$ or $g(C)$ is $0$ would imply that $J=BC=0$, so in this situation $g(B),g(C)>0$.
Using Proposition \ref{Selmerformula} to $H^{1}(G_{\Q,lp},\omega_{p}^{-1})$ we see that it is $1$-dimensional and we have $g(C)=1$. Similarly, applying  Proposition \ref{Selmerformula} to $H^{1}(G_{\Q,lp},\omega_{p})$, we see that $\mathrm{dim} H^{1}(G_{\Q,lp},\omega_{p}) =2$, so we can not conclude that $g(B)=1$. But by applying Kisin's lemma (\ref{kisin}), we easily see that, $(B/mB)^{\vee}$ actually sits inside a subgroup of $H^{1}(G_{\Q,lp},\omega_{p})$, namely the Selmer group $H_{\LL}^{1}(G_{\Q},\omega_{p})$, where the local Selmer conditions $\LL_{v}$ are given by
\begin{equation*}
\LL_{v}=
\begin{cases}
H_{unr}^{1}(G_{v},\omega_{p}) & \text{if } v \neq l,p,\infty,\\
H_{f}^{1}(G_{\Q_{p}},\omega_{p}) & \text{if } v=p,\\
H^{1}(G_{\Q_{l}},\omega_{p}) & \text{if } v=l,\\
0 & \text{if } v= \infty.
\end{cases}
\end{equation*}
To see this, let $\rho$ be a representation of $G_{\Q,lp}$, which is an extension of $\omega_{p}^{-1}$ by $1$ and is in the image of $i_{B}$. We have an exact sequence 
\begin{equation*}
0 \to 1|_{G_{\Q_{p}}} \to \rho|_{G_{\Q_{p}}} \to \omega_{p}^{-1}|_{G_{\Q_{p}}} \to 0
\end{equation*}
By the left exactness of the functor $D_{cris}$, we see $D_{cris}(\rho|_{G_{\Q_{p}}})$ contains $D_{cris}(1|_{G_{\Q_{p}}})$, which is generated by a nonzero eigenvector of eigenvalue $1$. Applying Kisin's Lemma (\ref {kisin}) with $I=m$, tells us that $D_{cris}(\rho|_{G_{\Q_{p}}})$ contains a nonzero eigenvector of eigenvalue $p$. Thus $D_{cris}(\rho|_{G_{\Q_{p}}})$ has dimension $2$, in other words, $\rho$ is crystalline at $p$.\\

Now we compute the dimension of this Selmer group using the Proposition \ref{Selmerformula} as below:
\begin{align*}
dim\text{ } H_{\LL}^{1}(G_{\Q},\omega_{p}) &= dim\text{ } H_{\LL^{\perp}}^{1}(G_{\Q},\omega_{p}^{\ast}(1))\\ 
&+ dim\text{ } H^{0}(G_{\Q},\omega_{p}) - dim\text{ } H^{0}(G_{\Q},\omega_{p}^{\ast}(1))\\
&+ \sum_{v}(dim\text{ } \LL_{v}- dim\text{ } H^{0}(G_{v},\omega_{p})).
\end{align*}

Since $\omega_{p}^{\ast}(1) = \Q_p $, we have $dim\text{ } H_{\LL^{\perp}}^{1}(G_{\Q},\omega_{p}^{\ast}(1)) = 0$, as class group of $\Q$ is finite. Since $dim\text{ } H_{unr}^{1}(G_{v},\omega_{p}) = dim\text{ } H^{0}(G_{v},\omega_{p})$, there is no contribution coming from the local terms other than $ v = l,p,\infty$.
Let us compute the dimension of the remaining terms. From two global terms we have,\\
$ dim\text{ } H^{0}(G_{\Q}, \omega_{p})=0$, $dim\text{ } H^{0}(G_{\Q},\omega_{p}^{\ast}(1)) = dim\text{ } H^{0}(G_{\Q},1)= 1$.\\
At $v= l$, we have $$dim\text{ } H^{1}(G_{\Q_{l}},\omega_{p}) - dim\text{ } H^{0}(G_{\Q_{l}},\omega_{p}) = dim\text{ } H^{2}(G_{\Q_{l}},\omega_{p}) = dim\text{ } H^{0}(G_{\Q_{l}},\omega_{p}^{\ast}(1)) =1.$$\\
At $v=p$, we have $$dim\text{ } H_{f}^{1}(G_{\Q_{p}},\omega_{p}) - dim\text{ } H^{0}(G_{\Q_{p}},\omega_{p}) = \text{number of negative HT weights of }\omega_{p} =1.$$\\
At $v=\infty$, we have $ dim\text{ } \LL_{v} - dim\text{ } H^{0}(G_{\R},\omega_{p}) = 0$.\\
Thus putting this all together we get,
$$ dim\text{ } H_{\LL}^{1}(G_{\Q},\omega_{p}) = 0+ 0 -1 + 1 + 1 +0 =1.$$
Since $(B/mB)^{\vee} \hookrightarrow H_{\LL}^{1}(G_{\Q},\omega_{p})$ and since $ dim\text{ } H_{\LL}^{1}(G_{\Q},\omega_{p}) =1$, we get that $g(B)=1$. 

Since we have, $g(B)=g(C)=1$, by the work of Bella{\"i}che and Chenevier in \cite[section 5.4]{Bellaiche.2006b}, we get,  $BC=m$, the maximal ideal. We see that,
$$  1= g(B)g(C) \geq g(BC) =g(m) = dim_{k} \text{ } m/m^{2} \geq dim \text{ } A $$
Thus all the inequalities are actually equalities, and hence $A$ is regular local ring of dimension one. Thus $A$ is DVR and the eigencurve is smooth at $x$.
\end{proof}

\begin{cor}\label{E2fullsmatl}
The full eigencurve $\C^{full}(\ell)$ of tame level $\ell$ , is smooth at critical Eisenstein series of weight 2, $E_{2}^{crit_{p},ord_{l}}$, where $\ell \neq p$ is a prime. Hence the full cuspidal eigencurve $\C^{0,full}(\ell)$ of tame level $\ell$,  is smooth at  $E_{2}^{crit_{p},ord_{\ell}}$.
\end{cor}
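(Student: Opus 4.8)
The plan is to deduce smoothness of $\C^{full}(\ell)$ at $y := E_{2}^{crit_{p},ord_{\ell}}$ from the smoothness of $\C(\ell)$ at $x := E_{2}^{crit_{p}}$ (Theorem \ref{E2smatl}) by showing that the natural finite surjection $f : \C^{full}(\ell) \to \C(\ell)$ is a local isomorphism at $y$. Since $\HH^{full} = \HH[U_{\ell}]$, the map $f$ merely forgets $U_{\ell}$, and $U_{\ell}$ is integral over the image of $\HH$, so $f$ is finite. First I would check that the fibre $f^{-1}(x)$ is the single reduced point $y$: a point of $\C^{full}(\ell)$ above $x$ is a choice of $U_{\ell}$-eigenvalue refining the $\HH$-eigensystem $E_{2}^{crit_{p}}$, and the two candidate refinements of $E_{2}^{crit_{p}}$ at $\ell$ are $E_{2}^{crit_{p},ord_{\ell}}$ (with $U_{\ell}=1$) and $E_{2}^{crit_{p},crit_{\ell}}$ (with $U_{\ell}=\ell$). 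As recorded in Section \ref{sec:1}, the latter is not overconvergent, so only $U_{\ell}=1$ occurs and $f^{-1}(x)=\{y\}$.

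It then remains to show that $f$ induces an isomorphism on completed local rings. Write $A = \OO_{x}$ and $A^{full} = \OO_{y}$, completed. By Theorem \ref{E2smatl}, $A$ is a DVR with residue field $k$, and $A^{full} = A[U_{\ell}]$ is finite and local over $A$ with residue field $k$ (as $U_{\ell}\equiv 1$), so it suffices to prove $U_{\ell}\in A$. For this I would invoke local--global compatibility at $\ell$: since $\ell\neq p$, the residual restriction $\bar{\TT}|_{G_{\Q_{\ell}}} = (1+\omega_{p}^{-1})|_{G_{\Q_{\ell}}}$ is unramified with Frobenius eigenvalues $1$ and $\ell$, and I would restrict the reducible structure $R\cong\left[\begin{smallmatrix}A&B\\C&A\end{smallmatrix}\right]$ to $G_{\Q_{\ell}}$. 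Granting that the family through $x$ stays principal series (unramified) at $\ell$, the character $\TT|_{G_{\Q_{\ell}}}$ is over $A$ a sum of two unramified characters whose Frobenius values lie in $A$, and $U_{\ell}$ is the one reducing to $1$; hence $U_{\ell}\in A$ and $A^{full}=A$. The main obstacle is precisely this last input, namely ruling out a Steinberg (ramified at $\ell$) deformation in a neighbourhood of $x$, so that $f$ creates no second branch. This is exactly the phenomenon behind the failure of Paulin's level-lowering conjecture at $y$: were a generically special branch present, $f$ would not be a local isomorphism and $y$ would be non-smooth. Granting unramifiedness at $\ell$, $f$ is a local isomorphism at $y$, and $\C^{full}(\ell)$ is smooth there.

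Finally, for the full cuspidal eigencurve I would argue as follows. The closed immersion $\C^{0,full}(\ell)\hookrightarrow \C^{full}(\ell)$ passes through $y$, since $E_{2}^{crit_{p},ord_{\ell}}\in S_{2}^{\dagger}(\Gamma)$ (recorded in Section \ref{sec:1}). Hence $\OO_{y}$ on $\C^{0,full}(\ell)$ is, after completion, a quotient of the regular ring $A^{full}$. Because cuspidal forms accumulate at $y$, the germ of $\C^{0,full}(\ell)$ at $y$ has dimension $1$; a one-dimensional quotient of a one-dimensional regular local ring is that ring itself, so the completed local ring of $\C^{0,full}(\ell)$ at $y$ equals $A^{full}$, which is regular. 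Therefore $\C^{0,full}(\ell)$ is smooth at $E_{2}^{crit_{p},ord_{\ell}}$ as well.
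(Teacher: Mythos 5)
Your skeleton coincides with the paper's: reduce to Theorem \ref{E2smatl} via the map $f:\C^{full}(\ell)\to\C(\ell)$, check $f^{-1}(x)=\{y\}$ using that $E_{2}^{crit_{p},crit_{\ell}}$ is not overconvergent, and handle $\C^{0,full}(\ell)$ by the closed immersion into $\C^{full}(\ell)$ together with equidimensionality of dimension one (your ``one-dimensional quotient of a DVR is the DVR'' step is the same argument). But the key middle step --- why $f$ is a local isomorphism, equivalently why $U_{\ell}$ creates no extra structure over $A$ --- rests on a premise that is not merely unproven but \emph{false}. You ``grant'' that the family through $x$ stays unramified principal series at $\ell$, and you assert that the presence of a generically special (Steinberg at $\ell$) branch would destroy local isomorphy and smoothness. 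The opposite is true, and the paper proves it: almost all classical points near $x$ are cuspforms \emph{new} at $\ell$, so the unique component through $x$ is generically special at $\ell$. The paper rules out infinitely many $\ell$-old points precisely by your missing mechanism run in reverse: if the family were generically unramified at $\ell$, the extension class in $(B/mB)^{\vee}$ would be unramified at $\ell$ (and crystalline at $p$ by Lemma \ref{kisin}), hence would land in the Selmer group $H^{1}_{\LL}(G_{\Q},\omega_{p})$ with $\LL_{v}=H^{1}_{unr}$ for $v\neq p,\infty$, $\LL_{p}=H^{1}_{f}$, $\LL_{\infty}=0$, which Proposition \ref{Selmerformula} shows is zero --- contradicting $g(B)\geq 1$.

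You have also inverted the relation to Paulin's conjecture: the whole point of the paper's final section is that $y$ lies on a generically special component with $N_{x}=0$ (for one of the two associated Weil--Deligne representations) and is nevertheless smooth; that is the counterexample. A generically special branch does not obstruct $f$ being a local isomorphism, because for a form new at $\ell$ the $U_{\ell}$-eigenvalue is determined by the eigenvalues away from $\ell$ (local--global compatibility), so each $\ell$-newform near $x$ has a unique preimage under $f$ --- this is exactly how the paper concludes $f$ is a local isomorphism at $x$. Non-smoothness requires two genuinely distinct branches through the point, which is what actually occurs for $E_{2}^{crit_{p},ord_{\ell_{1}},ord_{\ell_{2}}}$ in $\C^{full}(\ell_{1}\ell_{2})$ (Corollary \ref{nonsmooth}). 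To repair your proof, delete the ``generically principal series at $\ell$'' hypothesis, insert the Selmer-group dimension computation above to show nearby classical points are new at $\ell$, and then conclude injectivity of $f$ near $x$ from the determinacy of $U_{\ell}$ on newforms; the rest of your argument then goes through as written.
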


\begin{proof}
Let $x$ be the point in $\C(\ell)$ corresponding to $E_{2}^{crit_{p}}$. Let $f: \C^{full}(\ell) \to \C(\ell)$ be the surjective map. Then $E_{2}^{crit_{p},ord_{l}}$ is the only preimage of $x$\;(since if $E_{2}^{crit_{p},crit_{l}}$ is not an overconvergent modular form). Almost all the classical points near $x$ in $\C(\ell)$ are cuspidal modular forms new at $\ell$. Indeed, if there are infinitely many cusp forms of old level $\ell$, then $(B/mB)^{\vee} \hookrightarrow H_{\LL}^{1}(G_{\Q},\omega_{p})$, where the local Selmer conditions $\LL_{v}$ are given by
\begin{equation*}
\LL_{v}=
\begin{cases}
H_{unr}^{1}(G_{v},\omega_{p}) & \text{if } v \neq p,\infty,\\
H_{f}^{1}(G_{\Q_{p}},\omega_{p}) & \text{if } v=p,\\
0 & \text{if } v= \infty.
\end{cases}
\end{equation*}
 We can compute the dimension of this Selmer group using Proposition \ref{Selmerformula} and see that it has dimension $0$, which is a contradiction. Any newform under the map $f$ has only 1 preimage since a newform at $\ell$ is uniquely determined by the eigenvalue of Hecke operators away from $\ell$. Hence $f$ is a local isomorphism at $x$. Since the point $x$ is smooth in $\C(\ell)$, the point $E_{2}^{crit_{p},ord_{l}}$ is smooth in $\C^{full}(\ell)$. Since $\C^{0,full}(\ell) \hookrightarrow \C^{full}(\ell)$ and both are equidimensional of dimension 1 and $E_{2}^{crit_{p},ord_{l}} \in \C^{0,full}(\ell)$, the eigencurve $\C^{0,full}(\ell)$ is smooth at $E_{2}^{crit_{p}, ord_{l}}$.
\end{proof}

{\bf Remark:} If $\ell_{1},\ell_{2} \neq p$ be two distinct primes, then $E_{2}^{crit_{p}}$ is non-smooth in $\C(\ell_{1}\ell_{2})$. $E_{2}^{crit_{p}}$ is smooth in both $\C(\ell_{1})$ and $\C(\ell_{2})$, since $\C(\ell_{1}) \hookrightarrow \C(\ell_{1}\ell_{2}) \hookleftarrow \C(\ell_{2})$, there exists a component of $\C(\ell_{1})$ and a component of $\C(\ell_{2})$ passing through $E_{2}^{crit_{p}}$ in $\C(\ell_{1}\ell_{2})$. In fact, extending this line of argument we can show that the classical modular  form $E_{2}^{crit_{p},ord_{l_{1}},ord_{l_{2}}}$ is non-smooth in the full eigencurve $\C^{full}(\ell_{1}\ell_{2})$. We also remark that our method does not provide information about smoothness of the eigencurve at $E_{2}^{crit_{p},ord_{l_{1}},crit_{l_{2}}}$ and $E_{2}^{crit_{p},crit_{l_{1}},ord_{l_{2}}}$.

\begin{cor}\label{nonsmooth}
The point corresponding to $E_{2}^{crit_{p},ord_{l_{1}},ord_{l_{2}}}$ is non-smooth in the full eigencurve $\C^{full}(\ell_{1}\ell_{2})$ of tame level $\ell_{1}\ell_{2}$. 
\end{cor}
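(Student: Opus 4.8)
The plan is to turn the preceding Remark into a precise statement: I would exhibit two distinct irreducible components of $\C^{full}(\ell_1\ell_2)$ passing through the point $x := E_{2}^{crit_{p},ord_{\ell_{1}},ord_{\ell_{2}}}$, which forces the local ring $\OO_x$ to have at least two minimal primes, hence to fail to be a domain and therefore to be non-regular. The two components will arise from the smoothness results already established at tame levels $\ell_1$ and $\ell_2$ separately.

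First I would set up the two $\ell$-stabilization (oldform degeneracy) embeddings. For $i \in \{1,2\}$ write $j$ for the other index. A point of $\C^{full}(\ell_i)$ records the eigenvalues of $(T_l)_{l \nmid \ell_i p}$, of $U_p$ and of $U_{\ell_i}$; in particular it records $T_{\ell_j}$. Choosing at each point the root of $X^{2}-a_{\ell_j}X+\ell_j$ that specializes to $1$ at the Eisenstein point (the $\ell_j$-ordinary root) defines a closed immersion $\iota_i : \C^{full}(\ell_i) \hookrightarrow \C^{full}(\ell_1\ell_2)$ trading the datum of $T_{\ell_j}$ for the datum of $U_{\ell_j}$ equal to this root. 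At $E_{2}^{crit_{p},ord_{\ell_i}}$ we have $a_{\ell_j}=1+\ell_j$, so the $\ell_j$-ordinary root is $1$, and hence $\iota_i\bigl(E_{2}^{crit_{p},ord_{\ell_i}}\bigr)=x$.

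Next, by Corollary \ref{E2fullsmatl} the point $E_{2}^{crit_{p},ord_{\ell_i}}$ is smooth in $\C^{full}(\ell_i)$, so it lies on a unique irreducible component $Z_i$, a smooth curve. Since $\iota_i$ is a closed immersion, $W_i := \iota_i(Z_i)$ is an irreducible closed subset of dimension $1$; as $\C^{full}(\ell_1\ell_2)$ is equidimensional of dimension $1$, any dimension-$1$ irreducible closed subset is already a whole component, so each $W_i$ is an irreducible component through $x$. The crux is then to show $W_1\neq W_2$, and here I would argue with classical points exactly as in the proof of Corollary \ref{E2fullsmatl}: almost all classical points of $Z_i$ near $E_{2}^{crit_{p},ord_{\ell_i}}$ are cuspidal eigenforms new at $\ell_i$ of level $\Gamma_0(\ell_i p)$, and under $\iota_i$ they become eigensystems that are new at $\ell_i$ and $\ell_j$-old with ordinary $\ell_j$-stabilization. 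By the uniqueness in Atkin--Lehner--Li newform theory, a cuspform new at $\ell_1$ and old at $\ell_2$ can never coincide with one new at $\ell_2$ and old at $\ell_1$; so $W_1$ and $W_2$ share only finitely many classical points, whence $W_1\neq W_2$.

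Finally, since the two distinct irreducible components $W_1$ and $W_2$ both contain $x$, the local ring $\OO_x$ is not an integral domain and therefore not regular, so $x=E_{2}^{crit_{p},ord_{\ell_{1}},ord_{\ell_{2}}}$ is non-smooth in $\C^{full}(\ell_1\ell_2)$. I expect the main obstacle to be making the degeneracy embeddings $\iota_i$ precise as morphisms of eigenvarieties, i.e.\ choosing the $\ell_j$-ordinary root consistently in families and verifying they are closed immersions; once this is in place, the newform-theoretic disjointness of the classical loci is what cleanly separates the two branches.
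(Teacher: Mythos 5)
Your proposal is correct in substance and shares the paper's overall strategy---produce two irreducible components of $\C^{full}(\ell_{1}\ell_{2})$ through $x=E_{2}^{crit_{p},ord_{\ell_{1}},ord_{\ell_{2}}}$, one born from tame level $\ell_{1}$ and one from tame level $\ell_{2}$, using the smoothness of $E_{2}^{crit_{p},ord_{\ell_{i}}}$ in $\C^{full}(\ell_{i})$ from Corollary \ref{E2fullsmatl}---but it diverges at the crucial step of proving the two components distinct. The paper argues Galois-theoretically: if the components coincided, the Zariski-dense sets of level-$\ell_{1}$ and level-$\ell_{2}$ newforms would force the pseudocharacter on that component to be unramified outside $p$ and crystalline at $p$, so $(B/mB)^{\vee}$ would inject into the Selmer group with unramified conditions at all $v\neq p$, $H_{f}^{1}$ at $p$ and $0$ at $\infty$, whose dimension is $0$ by Proposition \ref{Selmerformula}---contradicting $g(B)\geq 1$. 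You instead argue with classical points: on $W_{1}$ the classical eigensystems near $x$ are (cofinitely) new at $\ell_{1}$ and old at $\ell_{2}$, on $W_{2}$ they are new at $\ell_{2}$ and old at $\ell_{1}$, and by strong multiplicity one these two sets of eigensystems are disjoint, so a single component cannot contain both as cofinite subsets of its infinitely many classical points near $x$. This is more elementary and valid, since the genuinely Selmer-theoretic input is already consumed in Corollary \ref{E2fullsmatl} (that is exactly where ``almost all classical points near $E_{2}^{crit_{p},ord_{\ell_{i}}}$ are newforms at $\ell_{i}$'' comes from); the paper's route instead reuses the Selmer machinery a second time, at the price of the density argument that unramifiedness at a Zariski-dense set of points propagates to the whole component.

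One caveat, which you yourself flag as the main obstacle: the map $\iota_{i}$ as stated---a global closed immersion given by ``the root of $X^{2}-a_{\ell_{j}}X+\ell_{j}$ specializing to $1$ at the Eisenstein point''---is overstated. In a family the constant term is not the constant $\ell_{j}$ but interpolates $\ell_{j}^{k-1}$ over the weight space, and a single-valued analytic choice of root on all of $Z_{i}$ can be obstructed by monodromy where the discriminant vanishes. The standard repair is to pass to the degree-$2$ cover $\{(z,\lambda):\lambda^{2}-a_{\ell_{j}}(z)\lambda+\ell_{j}^{\ast}=0\}$ of $Z_{i}$, which is finite over $Z_{i}$ and maps to $\C^{full}(\ell_{1}\ell_{2})$; the image of its component through the point $(E_{2}^{crit_{p},ord_{\ell_{i}}},1)$ is then closed (finite maps are closed) and irreducible of dimension $1$, hence a component, and the map is injective on points because a point of the image determines $(z,\lambda)$: two monic quadratics with the same nonzero constant term sharing a root must coincide. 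With this repair your argument goes through; note the paper is even terser at this step, simply asserting that the two refinements ``appear'' in $\C^{full}(\ell_{1}\ell_{2})$ and that a component through $x$ contains infinitely many newforms of level $\ell_{i}$, so your version, once formalized, is if anything more precise than the published one.
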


\begin{proof}
Let $f$ be a classical modular newform of level $\Gamma_{0}(\ell_{1})$ appearing in the eigencurve $\C^{full}(\ell_{1})$. Let $f_{\alpha}$ and $f_{\beta}$ be two refinements at $\ell_{2}$ of $f$, then both $f_{\alpha}$ and $f_{\beta}$ appear in the eigencurve $\C^{full}(\ell_{1}\ell_{2})$. We see from the proof of Corollary \ref{E2fullsmatl} that $E_{2}^{crit_{p},ord_{l_{1}}}$ is smooth in the eigencurve $\C^{full}(\ell_{1})$ and almost all classical points near $E_{2}^{crit_{p},ord_{l_{1}}}$ are cuspidal newform of level $\ell_{1}$. Two refinements of $E_{2}^{crit_{p},ord_{l_{1}}}$, namely $E_{2}^{crit_{p},ord_{l_{1}},ord_{l_{2}}}$ and $E_{2}^{crit_{p},ord_{l_{1}},crit_{l_{2}}}$ appear in the eigencurve $\C^{full}(\ell_{1}\ell_{2})$. So there exists a component of $\C^{full}(\ell_{1}\ell_{2})$ passing through $E_{2}^{crit_{p},ord_{l_{1}},ord_{l_{2}}}$ which contains infinitely many cuspidal newforms of level $\ell_{1}$. Exactly same argument with $\C^{full}(\ell_{2})$ and $E_{2}^{crit_{p},ord_{l_{2}}}$ would show us that there exists a component of $\C^{full}(\ell_{1}\ell_{2})$ passing through $E_{2}^{crit_{p},ord_{l_{1}},ord_{l_{2}}}$ which contains infinitely many cuspidal newforms of level $\ell_{2}$. If these two components were same, then in the component at a Zariski dense set of points, the associated Galois representation $\rho_{f}$ would be unramified everywhere except $p$ and crystalline at $p$. Then we will have,$(B/mB)^{\vee} \hookrightarrow H_{\LL}^{1}(G_{\Q},\omega_{p})$, where the local Selmer conditions $\LL_{v}$ are given by
\begin{equation*}
\LL_{v}=
\begin{cases}
H_{unr}^{1}(G_{v},\omega_{p}) & \text{if } v \neq p,\infty,\\
H_{f}^{1}(G_{\Q_{p}},\omega_{p}) & \text{if } v=p,\\
0 & \text{if } v= \infty.
\end{cases}
\end{equation*}
As we saw in the proof of Corollary \ref{E2fullsmatl}, the dimension of this Selmer group is $0$, which is a contradiction. Hence these two components are distinct and $E_{2}^{crit_{p},ord_{l_{1}},ord_{l_{2}}}$ is non-smooth in the eigencurve $\C^{full}(\ell_{1}\ell_{2})$.
\end{proof}

Let $\chi$ is a Dirichlet character of conductor $\ell$, where $\ell \neq p$ is a prime. We will show that the eigencurve $\C(\ell^{2})$ is smooth at the point corresponding to the generalized Eisenstein series $E_{2,\chi,\chi^{-1}}^{crit_{p}}$. Our proof is exactly same as of Theorem \ref{E2smatl}, we just need to compute dimension of slightly different Selmer groups.

\begin{prop}\label{smatl1}
The eigencurve $\C(\ell^2)$ is smooth at the point corresponding to $E_{2,\chi,\chi^{-1}}^{crit_{p}}$.
\end{prop}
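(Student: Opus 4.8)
The plan is to repeat the proof of Theorem \ref{E2smatl} essentially line by line, twisting every coefficient character by the Galois avatar $\tilde\chi$ of $\chi$. Write $x$ for the point $E_{2,\chi,\chi^{-1}}^{crit_p}$ in $\C(\ell^2)$, and set $A=\OO_x$, $m$ its maximal ideal and $k=A/m$. Since $E_{2,\chi,\chi^{-1}}$ has $a_q=\chi(q)+\chi^{-1}(q)q$ for $q\nmid \ell p$, the residual pseudocharacter is $\bar\TT_x=\tilde\chi+\tilde\chi^{-1}\omega_p^{-1}$, the sum of the two characters $\chi_1=\tilde\chi^{-1}\omega_p^{-1}$ and $\chi_2=\tilde\chi$. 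These are distinct because their ratio $\tilde\chi^2\omega_p$ is ramified at $p$ (as $\tilde\chi$ is unramified at $p$ while $\omega_p$ is not), so Proposition \ref{B,C inj} applies and yields injections
\[
(B/mB)^\vee\hookrightarrow H^1(G_{\Q,\ell p},\tilde\chi^2\omega_p),\qquad (C/mC)^\vee\hookrightarrow H^1(G_{\Q,\ell p},\tilde\chi^{-2}\omega_p^{-1}).
\]
As before $g(B),g(C)>0$, and it suffices to show $g(B)=g(C)=1$: then the argument of Bella\"iche--Chenevier gives $BC=m$, and the chain $1=g(B)g(C)\ge g(m)=\dim_k m/m^2\ge\dim A$ forces $A$ to be regular of dimension one.

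The computation of $g(C)$ is the direct analogue of the one in Theorem \ref{E2smatl}. Applying Proposition \ref{Selmerformula} to the full group $H^1(G_{\Q,\ell p},\tilde\chi^{-2}\omega_p^{-1})$, the two global $H^0$ terms vanish, the place $\ell$ contributes $0$ (the relevant local $H^0$ vanishes because the dual character $\tilde\chi^2\omega_p^2$ is nontrivial on the decomposition group at $\ell$), the place $\infty$ contributes $0$ (the character is odd), and the single negative Hodge--Tate weight at $p$ contributes $1$; the dual Selmer group $H^1_{\LL^\perp}(G_\Q,\tilde\chi^2\omega_p^2)$ vanishes for weight reasons exactly as in the weight-$2$ case, so $g(C)=1$.

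For $g(B)$ the full group $H^1(G_{\Q,\ell p},\tilde\chi^2\omega_p)$ is again too large, and I would cut it down with Kisin's Lemma (\ref{kisin}). For an extension $0\to\tilde\chi\to\rho\to\tilde\chi^{-1}\omega_p^{-1}\to 0$ in the image of $i_B$, the module $D_{cris}(\rho|_{G_{\Q_p}})$ contains the crystalline line of the unramified subrepresentation $\tilde\chi$, of Frobenius eigenvalue $\chi(p)$; Lemma \ref{kisin} with $I=m$ supplies a second crystalline eigenvector of eigenvalue $U_p=\chi^{-1}(p)p$, and since $\chi(p)$ is a unit while $\chi^{-1}(p)p$ has valuation one these are independent, so $\rho$ is crystalline at $p$. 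Hence $(B/mB)^\vee$ injects into the Selmer group $H^1_\LL(G_\Q,\tilde\chi^2\omega_p)$ with $\LL_p=H^1_f(G_{\Q_p},\tilde\chi^2\omega_p)$, full $H^1$ at $\ell$, unramified away from $\ell p$, and $0$ at $\infty$; its dual coefficient module is $(\tilde\chi^2\omega_p)^{\ast}(1)=\tilde\chi^{-2}$. Running Proposition \ref{Selmerformula}, the place $p$ contributes $1$, the place $\infty$ contributes $0$, and—this is the one structural change relative to Theorem \ref{E2smatl}—the place $\ell$ now contributes $0$ rather than $1$, because $\tilde\chi^{-2}$ is ramified at $\ell$ (when $\chi^2\neq 1$) so that $H^0(G_{\Q_\ell},\tilde\chi^{-2})=0$; simultaneously the $H^0(G_\Q,\tilde\chi^{-2})$ term drops from $1$ to $0$, so the two changes cancel and the count is unchanged.

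The hard part, and the one input genuinely beyond the weight-$2$ case, is the vanishing of the dual Selmer group $H^1_{\LL^\perp}(G_\Q,\tilde\chi^{-2})$. Because the condition at $\ell$ is the full local $H^1$ (whose orthogonal is $0$, and which is in any case $0$ since $\tilde\chi^{-2}$ is ramified there), this dual group consists of everywhere unramified classes, and is therefore governed by the $\tilde\chi^2$-isotypic component of the $p$-part of the ideal class group of the subfield $F\subseteq\Q(\zeta_\ell)$ cut out by $\chi^2$. The decisive point is that $\tilde\chi^{-2}$ is even, so $F$ is totally real and the relevant component lies in the plus part of the class group, which vanishes (the $p$-part of the plus class number of this real abelian field being trivial, e.g. under Vandiver-type hypotheses and in the known range); this replaces the triviality of the class group of $\Q$ used in Theorem \ref{E2smatl}. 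When $\chi^2=1$ the whole computation degenerates to one identical to Theorem \ref{E2smatl}, with $\Q$ itself in place of $F$. Granting this vanishing we obtain $g(B)=1$, and together with $g(C)=1$ the smoothness of $\C(\ell^2)$ at $x$ follows as above.
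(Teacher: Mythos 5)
Your proposal tracks the paper's proof almost line by line: the same residual decomposition $\bar\TT_x=\tilde\chi+\tilde\chi^{-1}\omega_p^{-1}$, the same injections from Proposition \ref{B,C inj}, the same use of Kisin's Lemma \ref{kisin} with the unit eigenvalue $\chi(p)$ against the valuation-one eigenvalue $U_p=\chi^{-1}(p)p$ to force crystallinity, the same Selmer structure (full $H^1$ at $\ell$, $H^1_f$ at $p$), the same cancellation of the place-$\ell$ term against the global $H^0(G_\Q,\tilde\chi^{-2})$ term when $\chi^2\neq 1$, and the same reduction to Theorem \ref{E2smatl} when $\chi^2=1$. One bookkeeping slip in the $g(C)$ count: the contribution $1$ at $p$ comes from the local Euler characteristic applied to the full $H^1(G_{\Q_p},\tilde\chi^{-2}\omega_p^{-1})$, not from a negative Hodge--Tate weight --- in the paper's convention $\tilde\chi^{-2}\omega_p^{-1}$ has none (indeed $H^1_f(G_{\Q_p},\omega_p^{-1})=0$, as used in Theorem \ref{E2etale}); the total is unaffected.

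The genuine problem is your final step. You single out the vanishing of $H^1_{\LL^\perp}(G_\Q,\tilde\chi^{-2})$ as ``the hard part,'' reduce it to the triviality of the $p$-part of the $\chi^2$-component of the plus class group of the real field $F$ cut out by $\chi^2$, and then only assert this ``under Vandiver-type hypotheses and in the known range,'' concluding with ``granting this vanishing.'' As written, your proof is therefore conditional, and the intermediate statement you lean on is not a theorem: the $p$-part of the class group of a real abelian field of prime conductor need not vanish, so an argument that genuinely required it would fail for suitable $(p,\ell,\chi)$. But no such input is needed, because the Selmer group here is a vector space over a field of characteristic zero. The dual local conditions ($0$ at $\ell$, $H^1_f=H^1_{unr}$ at $p$ since $\tilde\chi^{-2}$ is unramified at $p$ with Hodge--Tate weight $0$, unramified elsewhere) confine $H^1_{\LL^\perp}(G_\Q,\tilde\chi^{-2})$ to everywhere-unramified classes; by inflation--restriction (the inflation term dies since $\mathrm{Gal}(F/\Q)$ is finite and the coefficients are a $\Q_p$-vector space) it embeds into the $\tilde\chi^{-2}$-isotypic part of $\mathrm{Hom}(\mathrm{Cl}_F\otimes\Z_p,\Q_p)$, which vanishes unconditionally because $\mathrm{Cl}_F$ is finite and $\Q_p$ is torsion-free --- exactly parallel to ``the class group of $\Q$ is finite'' in Theorem \ref{E2smatl}. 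Vandiver-type hypotheses are relevant only for torsion or divisible coefficients. (The paper disposes of this same term by citing Soul\'e's work on the Bloch--Kato conjecture, $\chi^2\omega_p$ being pure of motivic weight $-2$.) With that repair your argument becomes a complete and unconditional proof agreeing with the paper's.
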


\begin{proof}
Let $y$ be the point in $\C(\ell^2)$ corresponding to $E_{2,\chi,\chi^{-1}}^{crit_{p}}$. Let $A=\OO_{y}$ be the local ring at $y$ and $m$ be the maximal ideal. Then we have pseudocharacter $\TT_{y}:G_{\Q,lp} \to A$, such that $\bar{\TT_{y}}:= \TT_{y} \otimes A/m = \chi +\chi^{-1} \omega_{p}^{-1}$, here $\omega_{p}$ is the $p$-adic cyclotomic character.
By construction and proposition(\ref{B,C inj}) we have 
\begin{align*}
 i_{B}:(B/mB)^{\vee} &\hookrightarrow Ext_{G_{\Q,lp},cts}^{1}(\chi^{-1}\omega_{p}^{-1},\chi) \cong H^{1}(G_{\Q,lp},\chi^{2}\omega_{p}) \\
 i_{C}:(C/mC)^{\vee} &\hookrightarrow Ext_{G_{\Q,lp},cts}^{1}(\chi , \chi^{-1}\omega_{p}^{-1}) \cong H^{1}(G_{\Q,lp},\chi^{-2}\omega_{p}^{-1})
\end{align*}
A similar argument as in Theorem \ref{E2smatl} shows that $g(C)=1$ and $(B/mB)^{\vee}$ sits inside the Selmer group $H_{\LL}^{1}(G_{\Q},\chi^{2}\omega_{p})$, where the local Selmer conditions $\LL_{v}$ are given by
\begin{equation*}
\LL_{v}=
\begin{cases}
H_{unr}^{1}(G_{v},\chi^{2}\omega_{p}) & \text{if } v \neq l,p,\infty,\\
H_{f}^{1}(G_{\Q_{p}},\chi^{2}\omega_{p}) & \text{if } v=p,\\
H^{1}(G_{\Q_{l}},\chi^{2}\omega_{p}) & \text{if } v=l,\\
0 & \text{if } v= \infty.
\end{cases}
\end{equation*}

Assume that $\chi \neq \chi^{-1}$, note that the first term in the right hand side of dimension formula for the Selmer group in Proposition \ref{Selmerformula} vanishes by the work of Soul\'{e} on Bloch-Kato conjecture\cite{MR618313} as $\chi^{2}\omega_{p}$ is pure of motivic weight -2. We can compute the remaining terms and see that $g(B)=1$.\\
Now assume that $\chi = \chi^{-1}$, then the Selmer group $H_{\LL}^{1}(G_{\Q},\chi^{2}\omega_{p})$ is same as that of Theorem \ref{E2smatl}, and we see that $g(B)=1$.
\end{proof}

\section{$E_{2}^{crit_{p}}$ is \'etale over the weight space in $\C(\ell)$}

In the previous section we proved that the eigencurve $\C(\ell)$ is smooth at $E_{2}^{crit_{p}}$. In this section we will show that the eigencurve is \'etale over the weight space at this point.\\

Let $A$ be a discrete valuation domain and $K$ be the field of fraction and $k$ be the residue field of $A$. Let $\pi$ denote a uniformizer of $A$. Let $(\rho,V)$ be a representation of $G$ of dimension $2$. Let $X$ denote the set of stable lattices of $V$ upto homothety. Let $x$ and $x^{\prime}$ be two points in $X$, then $x$ and $x^{\prime}$ are neighbor if there exists lattices $\Lambda$, $\Lambda^{\prime}$ in $V$, such that $x = [\Lambda]$, $x^{\prime}= [\Lambda^{\prime}]$ and $ \pi \Lambda \subset \Lambda^{\prime} \subset \Lambda$. The set $X$ is a tree, called the Bruhat-Tits tree of $GL_{2}(K)$. Let $S$ denote the subset of $X$ fixed by $\rho(G)$. Then $S$ is nonempty and bounded. If $x \in S$ and $x= [\Lambda] = [\Lambda^{\prime}]$, then $\bar{\rho}_{\Lambda} = \bar{\rho}_{\Lambda^{\prime}}$, hence there is no ambiguity in calling that representation $\bar{\rho}_{x}$. If $x$ is in $S$, then $x$ has no neighbor in $S$ if and only if $\bar{\rho}_{x}$ is irreducible, $x$ has exactly one neighbor in $S$ if and only if $\bar{\rho}_{x}$ is reducible but indecomposable and $x$ has more than one neighbor in $S$ if and only if $\bar{\rho}_{x}$ is sum of two characters. The number of neighbors is 2 if the two character appearing in $\bar{\rho}_{x}$ are distinct. Assume that $\bar{\rho}^{ss}$ be sum of two distinct characters $\chi_{1},\chi_{2}: G \to k^{*}$. Let $l$ be the length of the set $S$\;(since $\bar{\rho}^{ss}$ is sum of two distinct characters, number of neighbors is at most 2, hence $S$ is a segment $[x_{0},x_{l}]$). Let $n= n(\rho)$ be the largest integer such that there exists two characters $\psi_{1}, \psi_{2}: G \to (A/\pi^{n}A)^{*}$, such that for all $g \in G$, $tr(\rho(g)) = \psi_{1}(g) + \psi_{2}(g)$ (mod $\pi^{n}$). Then $l =n$\;(for details see \cite{MR3201061}). \\

With these notations we prove:

\begin{thm}\label{E2etale}
The eigencurve $\C(\ell)$ is \'etale over the weight space at the point $x = E_{2}^{crit_{p}}$ .
\end{thm}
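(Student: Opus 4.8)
The plan is to leverage Theorem \ref{E2smatl}: it shows $A=\OO_x$ is a discrete valuation ring, so I fix a uniformizer $\pi$ and write $m=(\pi)$, $k=A/m$. The weight morphism $\kappa\colon\C(\ell)\to\W$ is finite and flat in a neighbourhood of $x$, and there both curves are smooth of dimension one, so $\kappa$ is \'etale at $x$ if and only if it is unramified there, i.e.\ if and only if the image $t\in A$ of a uniformizer of $\OO_{\W,\kappa(x)}$ generates $m$. Since the component through $x$ is finite and dominant over the one-dimensional weight space its weight is non-constant, so $t\neq 0$ and $t\in m$; the theorem therefore amounts to the single assertion $(t)=m$, equivalently $v_A(t)=1$.

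I would then connect $t$ to the restriction $\TT_x|_{G_{\Q_p}}$ through its ideal of total reducibility $J_p$, the smallest ideal $I$ with $\TT_x|_{G_{\Q_p}}\otimes A/I$ a sum of two characters. First I would check the inclusion $J_p\subseteq(t)$. Over the fibre $A/(t)$ the weight is constant, so $\rho|_{G_{\Q_p}}\otimes A/(t)$ carries the two distinct constant Hodge--Tate weights $0$ and $1$, while Lemma \ref{kisin} supplies a crystalline line on which Frobenius acts by $U_p\equiv p$, the complementary Frobenius eigenvalue being $\equiv 1$; as the valuations $0$ and $1$ of these two eigenvalues match the two Hodge--Tate weights, the module is crystalline and splits as a sum of two crystalline characters, so $\TT_x|_{G_{\Q_p}}$ is reducible modulo $(t)$ and hence $J_p\subseteq(t)$. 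Because $(t)\subseteq m$ as well, it now suffices to prove that the reducibility locus at $p$ is the full maximal ideal, $J_p=m$: this forces $m=J_p\subseteq(t)\subseteq m$, hence $(t)=m$ and $\kappa$ \'etale.

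The remaining and decisive step, where the Bruhat--Tits tree enters, is $J_p=m$. Realizing $\rho|_{G_{\Q_p}}$ on a lattice over $K=\mathrm{Frac}(A)$, let $S_p$ be the segment of the tree fixed by $\rho(G_{\Q_p})$; by the dictionary recalled before this theorem, $J_p=(\pi^{n})$ with $n=n(\rho|_{G_{\Q_p}})$ equal to the length of $S_p$, so $J_p=m$ says exactly that $S_p$ has length one. Since $\overline{\rho}^{\,ss}|_{G_{\Q_p}}=1\oplus\omega_p^{-1}$ is a sum of two distinct characters, $S_p$ is an honest segment of length at least one, and the content is the upper bound. Here I would again invoke Lemma \ref{kisin}: it provides, over every $A/\pi^{r}$, a free rank-one crystalline subspace on which $\varphi$ acts by $U_p\equiv p$, and the separation of the two crystalline Frobenius eigenvalues $1\neq p$ means this subspace is a $\varphi$-direct summand that rigidifies the homothety classes of stable lattices, leaving no room for an interior vertex of $S_p$.

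I expect this upper bound on the length of $S_p$ to be the genuine obstacle: it is precisely the statement that $\rho|_{G_{\Q_p}}$ does not become reducible modulo $m^2$, so one must rule out that the local representation at $p$ degenerates to second order even though globally, by Theorem \ref{E2smatl}, the reducibility ideal is already $m$. The decisive input is the distinctness of the crystalline Frobenius eigenvalues together with the freeness in Kisin's Lemma, which is what obstructs the crystalline line from merging with its complement; once $S_p$ is shown to have length one we obtain $J_p=m$, and with $J_p\subseteq(t)\subseteq m$ this yields $(t)=m$, so the weight is a uniformizer of $A$ and $\C(\ell)$ is \'etale over the weight space at $E_{2}^{crit_{p}}$.
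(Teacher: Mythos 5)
Your framing is right up to the decisive point: reducing \'etaleness to the single assertion $(t)=m$, and routing this through the reducibility ideal $J_{p}$ of $\TT_{x}|_{G_{\Q_{p}}}$, is exactly the paper's strategy (the paper simply cites Bella\"\i che--Chenevier's theorem that $J_{p}=(\kappa-\kappa(x))$, which subsumes your sketched inclusion $J_{p}\subseteq (t)$). The genuine gap is your final step, $J_{p}=m$. Lemma \ref{kisin} cannot deliver it: Kisin's lemma holds at every critical Eisenstein point, for every cofinite-length ideal $I$, and is perfectly compatible with $J_{p}=(\pi^{e})$ for any $e\geq 1$ --- indeed, by the quoted theorem $J_{p}=(\kappa-\kappa(x))$, so if the weight map were ramified of order $e$ at $x$, Kisin's lemma would still hold verbatim while $S_{p}$ had length $e$. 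The free rank-one crystalline $\varphi$-line with eigenvalue $U_{p}$ does not ``rigidify homothety classes'' or exclude interior vertices of $S_{p}$; your sentence asserting this is the statement $J_{p}=m$ restated, not proven. Moreover, no purely local argument at $p$ can suffice, since the local situation at $p$ does not see the tame level $\ell$, whereas the conclusion is false-in-spirit without global input (compare the remark after Corollary \ref{E2fullsmatl} on level $\ell_{1}\ell_{2}$).

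What is missing is the global-to-local mechanism that the paper actually uses. From Theorem \ref{E2smatl} the \emph{global} reducibility ideal is $m$, so the global fixed set $S$ is a segment of length $1$ with endpoints $x_{0},x_{1}$, whose residual representations are nonsplit extensions in $Ext^{1}_{G_{\Q,\ell p}}(\omega_{p}^{-1},1)$ and $Ext^{1}_{G_{\Q,\ell p}}(1,\omega_{p}^{-1})$ respectively. The crux is to show both classes remain nonsplit after restriction to $G_{\Q_{p}}$: for $x_{0}$, the class lies in the Selmer group $H^{1}_{\LL}(G_{\Q},\omega_{p})\cong \OO_{\Q,\{\ell\}}^{*}\otimes_{\Z}\Q_{p}$, generated by $\ell$, and restriction to $H^{1}(G_{\Q_{p}},\omega_{p})\cong \widehat{\Q_{p}^{*}}\otimes_{\Z_{p}}\Q_{p}$ sends $\ell$ to $\ell\neq 0$, hence is injective; for $x_{1}$, the Selmer group $H^{1}_{\LL}(G_{\Q},\omega_{p}^{-1})$ with the $H^{1}_{f}$ condition at $p$ vanishes (Soul\'e's theorem kills $H^{1}_{\LL^{\perp}}(G_{\Q},\omega_{p}^{2})$, and the dimension formula of Proposition \ref{Selmerformula} gives $0$), so $H^{1}(G_{\Q,\ell p},\omega_{p}^{-1})\hookrightarrow H^{1}(G_{\Q_{p}},\omega_{p}^{-1})$. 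Consequently $\bar{\rho}_{x_{0}}|_{G_{\Q_{p}}}$ and $\bar{\rho}_{x_{1}}|_{G_{\Q_{p}}}$ are reducible but indecomposable, so $x_{0}$ and $x_{1}$ map to the two \emph{endpoints} $y_{0},y_{n}$ of $S_{p}$; since $S\subset S_{p}$ and $x_{0},x_{1}$ are adjacent, $n=1$ and $J_{p}=m$. Your proposal never invokes the global segment $S$ nor proves local nonsplitness of its endpoint extensions, and without these the upper bound on the length of $S_{p}$ does not follow.
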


\begin{proof}
Let $A = \OO_{x}$. Since the eigencurve is smooth at $x$, $A$ is a discrete valuation domain. The ideal of reducibility of $\TT_{x}: G_{\Q,lp} \to A$, is $ J= m = (\pi)$. So the length of the segment $S$ is 1. By the work of Bellaiche and Chenevier it is known that the ideal of irreducibility of $\TT|_{G_{\Q_{p}}}:G_{\Q_{p}} \to A$, is $J_{p} = (\kappa - \kappa(x))$ \cite[Theorem 2]{Bellaiche.2006b}, where $\kappa: \C(\ell) \to \W$ is the weight map. Thus, to prove that the eigencurve is \'etale over the weight space at $E_{2}^{crit_{p}}$ it is enough to show $J_{p}$ is also the maximal ideal. $A$ is a discrete valuation domain, so $J_{p} = (\pi^{n})$ for some $n$. A priori,  $S_{p}$ has length $n$\;($S_p$ is the set described as above for $\rho|_{G_{\Q_{p}}}$), also $S \subset S_{p}$. We want to prove $S_{p}$ has length $1$.\\

 $S$ has length 1, so it consists of 2 points, say $x_{0}$ and $x_{1}$. Since both these points have exactly 1 neighbor, $\bar{\rho}_{x_{i}}$ is reducible but indecomposable for $i=0,1$. Say $\bar{\rho}_{x_{0}} \in Ext_{G_{\Q,lp}}(\omega_{p}^{-1},1)$, then $\bar{\rho}_{x_{1}} \in Ext_{G_{\Q,lp}}(1,\omega_{p}^{-1})$. In fact as in the proof of \ref{E2smatl},  $\bar{\rho}_{x_{0}}$ belongs to a Selmer group $H_{\LL}^{1}(G_{\Q},\omega_{p})$, where the local Selmer conditions $\LL_{v}$ are given by
\begin{equation*}
\LL_{v}=
\begin{cases}
H_{unr}^{1}(G_{v},\omega_{p}) & \text{if } v \neq l,p,\infty,\\
H_{f}^{1}(G_{\Q_{p}},\omega_{p}) & \text{if } v=p,\\
H^{1}(G_{\Q_{l}},\omega_{p}) & \text{if } v=l,\\
0 & \text{if } v= \infty.
\end{cases}
\end{equation*}
 
We have a map $ res: H_{\LL}^{1}(G_{\Q},\omega_{p}) \to H^{1}(G_{\Q_{p}}, \omega_{p})$. Since $H_{\LL}^{1}(G_{\Q},\omega_{p}) \cong \OO_{\Q,\{ l \}}^{*} \otimes_{\Z} \Q_{p}$, it is generated by $l$. On the other hand, $H^{1}(G_{\Q_{p}}, \omega_{p}) \cong \widehat{\Q^{*}_{p}} \otimes_{\Z_{p}} \Q_{p}$. Under the restriction map $l$ goes to $l$. So the map $ res: H_{\LL}^{1}(G_{\Q},\omega_{p}) \hookrightarrow H^{1}(G_{\Q_{p}}, \omega_{p})$ is injective, hence the element corresponding to $\bar{\rho}_{x_{0}}$ under the restriction maps to a nonzero element of $H^{1}(G_{\Q_{p}}, \omega_{p})$. Thus $\bar{\rho}_{x_{0}|G_{\Q_{p}}}$ is reducible but indecomposable as well, hence it has exactly one neighbor in $S_{p}$. \\

Suppose $S_{p}$ consists of the segment $[y_{0}, y_{n}]$, and the path connecting $y_0$ to $y_n$ is $y_{0}, y_{1}, \dots, y_{n}$. The point corresponding to $\bar{\rho}_{x_{0}|G_{\Q_{p}}}$ has exactly one neighbor, so it is one of the end points. Without loss of generality, let $y_{0}$ be the point corresponding to $\bar{\rho}_{x_{0}|G_{\Q_{p}}}$.\\

Now we consider $\bar{\rho}_{x_{1}} \in Ext_{G_{\Q,lp}}( 1, \omega_{p}^{-1}) \cong H^{1}(G_{\Q,lp}, \omega_{p}^{-1})$.\\ 
Consider the Selmer group $H_{\LL}^{1}(G_{\Q},\omega_{p}^{-1})$, where the local Selmer conditions $\LL_{v}$ are given by
\begin{equation*}
\LL_{v}=
\begin{cases}
H_{unr}^{1}(G_{v},\omega_{p}^{-1}) & \text{if } v \neq l,p,\infty,\\
H_{f}^{1}(G_{\Q_{p}},\omega_{p}^{-1}) & \text{if } v=p,\\
H^{1}(G_{\Q_{l}},\omega_{p}^{-1}) & \text{if } v=l,\\
0 & \text{if } v= \infty.
\end{cases}
\end{equation*}
Note that $\LL_{p}=H_{f}^{1}(G_{\Q_{p}},\omega_{p}^{-1}) = 0 $. \\
We also have an exact sequence from the definition of Selmer group as:
\begin{equation*}
0 \to H_{\LL}^{1}(G_{\Q},\omega_{p}^{-1}) \to H^{1}(G_{\Q,lp}, \omega_{p}^{-1}) \to H^{1}(G_{\Q_{p}},\omega_{p}^{-1})/ \LL_{p} \times H^{1}(G_{\Q_{l}},\omega_{p}^{-1})/ \LL_{l} \times H^{1}(G_{\R}, \omega_{p}^{-1})
\end{equation*}
Note that $H^{1}(G_{\Q_{p}},\omega_{p}^{-1})/ \LL_{p} \times H^{1}(G_{\Q_{l}},\omega_{p}^{-1})/ \LL_{l} \times H^{1}(G_{\R}, \omega_{p}^{-1}) \cong H^{1}(G_{\Q_{p}},\omega_{p}^{-1})$. Thus we have an exact sequence:
\begin{equation*}
0 \to H_{\LL}^{1}(G_{\Q},\omega_{p}^{-1}) \to H^{1}(G_{\Q,lp}, \omega_{p}^{-1}) \to H^{1}(G_{\Q_{p}},\omega_{p}^{-1})
\end{equation*}
Since $\mathrm{dim} H_{\LL^{\perp}}^{1}(G_{\Q},\omega_{p}^{2}) = 0$ by the work of Soul\'e \cite{MR618313} on Bloch-Kato conjecture, we can use the dimension formula for Selmer group as in Theorem \ref{E2smatl} and get that $\mathrm{dim} H_{\LL}^{1}(G_{\Q},\omega_{p}^{-1}) = 0$ as the contribution from both global and local terms are zero. Hence we have an injection $f: H^{1}(G_{\Q,lp}, \omega_{p}^{-1}) \hookrightarrow H^{1}(G_{\Q_{p}},\omega_{p}^{-1})$. The element corresponding to $\bar{\rho}_{x_{1}}$ under the  map $f$ maps to a nonzero element of $H^{1}(G_{\Q_{p}}, \omega_{p}^{-1})$, hence it is reducible but indecomposable as well, hence it has exactly one neighbor in $S_{p}$. Thus it corresponds to the point $y_{n}$. \\

Since $S \subset S_{p}$, and since $x_{0} \mapsto y_{0}$ and $x_{1} \mapsto y_{n}$, we have $n =1$. Thus $J_{p} = (\pi) = m$.
\end{proof}

\begin{cor}\label{2rep}
There exists two representations $\rho_{1}, \rho_{2}: G_{\Q,{lp}} \to GL_{2}(A)$, such that $tr(\bar{\rho_{i}}) = 1+ \omega_{p}^{-1}$ for $i =1,2$, and $\bar{\rho}_{1}$ is a nontrivial extension of trivial representation by $\omega_{p}^{-1}$ and $\bar{\rho}_{2}$ is nontrivial extension of $\omega_{p}^{-1}$ by trivial representation. Moreover $(\bar{\rho}_{i})_{|_{G_{\Q_{p}}}}$ is reducible but indecomposable for $i=1,2$. 
\end{cor}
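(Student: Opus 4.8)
The plan is to read the two representations off the Bruhat--Tits tree picture established in the proof of Theorem \ref{E2etale}, so that the corollary becomes essentially a repackaging of what is proved there. Recall that by Theorem \ref{E2smatl} the ring $A = \OO_x$ is a DVR with uniformizer $\pi$ and fraction field $K$, and that the reducibility ideal of $\TT_x$ is $m = (\pi)$. Let $\rho$ be the $2$-dimensional representation over $K$ attached to $\TT_x$ that is used at the beginning of this section, and let $S$ be its set of $G_{\Q,lp}$-stable homothety classes of lattices in the tree of $GL_2(K)$. By the proof of Theorem \ref{E2etale}, $S$ is the segment $[x_0,x_1]$ of length $l = n = 1$, so $S = \{x_0, x_1\}$ has exactly two points.

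I would then construct $\rho_1, \rho_2$ from the two endpoints. Choosing lattice representatives $\Lambda_0, \Lambda_1$ of $x_0, x_1$ and $A$-bases yields representations $\rho_{\Lambda_i} \colon G_{\Q,lp} \to GL_2(A)$ with $\mathrm{tr}\, \rho_{\Lambda_i} = \TT_x$; set $\rho_2 := \rho_{\Lambda_0}$ and $\rho_1 := \rho_{\Lambda_1}$. Reducing modulo $m$ gives $\mathrm{tr}(\bar\rho_i) = \bar\TT_x = 1 + \omega_p^{-1}$ for $i = 1,2$, which is the first assertion. As $x_0$ and $x_1$ are the endpoints of $S$, each has exactly one neighbor in $S$, so by the tree dictionary recalled at the start of this section each $\bar\rho_{x_i}$ is reducible but indecomposable, i.e.\ a nontrivial extension involving the two distinct characters $1$ and $\omega_p^{-1}$. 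The structural point, already used in the proof of Theorem \ref{E2etale}, is that the two endpoints give extensions in opposite directions: $\bar\rho_{x_0} \in \mathrm{Ext}^1_{G_{\Q,lp}}(\omega_p^{-1}, 1)$ and $\bar\rho_{x_1} \in \mathrm{Ext}^1_{G_{\Q,lp}}(1, \omega_p^{-1})$. Hence $\bar\rho_1$ is a nontrivial extension of the trivial representation by $\omega_p^{-1}$ and $\bar\rho_2$ is a nontrivial extension of $\omega_p^{-1}$ by the trivial representation.

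For the local assertion I would quote the computation inside the proof of Theorem \ref{E2etale}: there it is shown, using the injections $H^1_{\LL}(G_\Q, \omega_p) \hookrightarrow H^1(G_{\Q_p}, \omega_p)$ and $H^1(G_{\Q,lp}, \omega_p^{-1}) \hookrightarrow H^1(G_{\Q_p}, \omega_p^{-1})$, that the restrictions $\bar\rho_{x_0}|_{G_{\Q_p}}$ and $\bar\rho_{x_1}|_{G_{\Q_p}}$ remain nonzero classes, hence reducible but indecomposable, so that $x_0$ and $x_1$ map to the two endpoints $y_0, y_1$ of $S_p$. This gives $(\bar\rho_i)|_{G_{\Q_p}}$ reducible but indecomposable for $i = 1, 2$, completing the proof.

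The step I expect to require the most care is confirming that the two endpoints of the length-one segment genuinely realize the two opposite extension directions with nonzero class in each case --- that is, that passing from $x_0$ to $x_1$ interchanges the sub and quotient characters and that neither reduction splits. Both facts are encoded in the assertion that each endpoint has exactly one neighbor in $S$, but it is worth checking them against the explicit Selmer-group injections of Theorems \ref{E2smatl} and \ref{E2etale} rather than treating them as automatic.
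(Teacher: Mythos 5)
Your proposal is correct and is essentially the paper's own argument: the paper proves this corollary in one line, calling it ``a restatement of the fact the set $S$ and $S_{p}$ has two points,'' and your write-up simply unfolds that statement by choosing lattice representatives at the two endpoints of $S$ and quoting the tree dictionary and the local injections $H^{1}_{\LL}(G_{\Q},\omega_{p}) \hookrightarrow H^{1}(G_{\Q_{p}},\omega_{p})$ and $H^{1}(G_{\Q,lp},\omega_{p}^{-1}) \hookrightarrow H^{1}(G_{\Q_{p}},\omega_{p}^{-1})$ already established in the proof of Theorem \ref{E2etale}. Your labeling of $\rho_{1}$, $\rho_{2}$ with the endpoints $x_{1}$, $x_{0}$ matches the extension directions asserted in the corollary, so nothing is missing.
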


\begin{proof}
It is just a restatement of the fact the set $S$ and $S_{p}$ has two points.
\end{proof}

Let $\mathcal{T} : G_{\Q,lp}  \to \OO(\C^{0,full}(l))$ denote the universal pseudocharacter. If $\kappa \in \C^{0,full}(l)(\CC_{p})$, we denote the localization to $\kappa$ by $\mathcal{T}_{\kappa}$. A theorem of R. Taylor ensures that $\mathcal{T}_{\kappa}$ is trace of a continuous semisimple 2 dimensional representation $\rho_{\kappa}$ over $\CC_{p}$. Let $R$ denote the absolutely reducible locus in $\C^{0,full}(l)(\CC_{p})$ in the sense that $\rho_{\kappa}$ is absolutely reducible. Notice that $E_{2}^{crit_{p},ord_{l}} \in R$. Let $R^{0} = \C(l) \backslash R$. \\

To any $x \in \C^{0,full}(l)$ one can associate a 2 dimensional Weil-Deligne representation $(\rho_{x,l}, N_{x})$(see \cite{Bellaiche.2009} for further details). If moreover $x \in R^{0}$, the associated Weil-Deligne representation is unique(for details we refer reader to \cite{MR2827001}). Paulin made the following conjecture regarding geometry of the full cuspidal eigencurve.

\begin{conj}{\bf Level Lowering} \cite{Paulin.2012}(Paulin)
Let $Z \subset \C^{0,full}(l)$ be a generically special (at $l$) component such that there exists an $x \in Z$ such that $N_{x} = 0$ , then there exists $Z^{\prime} \subset \C^{0,full}(l)$, generically principal series (at $l$) irreducible component, such that $x \in Z^{\prime}$.
\end{conj}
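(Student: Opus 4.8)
Since the statement is a conjecture that the results proved above in fact contradict, the natural task is not to prove it but to \emph{refute} it at the point $x = E_2^{crit_p, ord_\ell}$; this is the plan I would carry out, taking Corollary \ref{E2fullsmatl} as the key input.

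First I would verify that $x$ genuinely meets the hypotheses of the conjecture, so that the refutation is not vacuous. The semisimple representation attached to $x$ is $\rho_x = 1 \oplus \omega_p^{-1}$, which is unramified at $\ell$ because $\ell \neq p$; hence its monodromy operator vanishes, $N_x = 0$. For the generic behavior of the component through $x$, I would invoke the Selmer-group computation from the proof of Corollary \ref{E2fullsmatl}: it shows that old cuspidal forms of level $\ell$ do \emph{not} accumulate at $x$, so almost every classical point in a neighborhood of $x$ is a cuspidal newform of level $\ell$. Such weight-$2$, trivial-character newforms (with $a_\ell = 1$) are Steinberg, hence special at $\ell$, so the local component $Z$ through $x$ is generically special. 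Thus $x \in Z$ with $Z$ generically special and $N_x = 0$, which is exactly the configuration the conjecture concerns.

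Next I would extract the geometric prediction and contradict it. If the conjecture held at $x$, there would exist a second irreducible component $Z'$, generically principal series at $\ell$, with $x \in Z'$. A generically special component and a generically principal series component differ in their generic Weil--Deligne type at $\ell$ (on a dense subset $N \neq 0$ in the former, $N = 0$ in the latter), so $Z \neq Z'$. Two distinct irreducible components meeting at $x$ would force $x$ to be a singular point of $\C^{0,full}(\ell)$, since its local ring would then have more than one minimal prime. But Corollary \ref{E2fullsmatl} shows that the local ring of $\C^{0,full}(\ell)$ at $x$ is a discrete valuation ring, in particular a domain, so $x$ is smooth and lies on a unique irreducible component. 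This contradiction shows the Level Lowering conjecture fails at $E_2^{crit_p, ord_\ell}$.

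The delicate point is the hypothesis check rather than the final contradiction: I must be certain the component through $x$ really is generically special and not already principal series, for otherwise the conjecture's premise would fail and there would be nothing to refute. This rests entirely on the vanishing Selmer-group calculation of Corollary \ref{E2fullsmatl}, which rules out an accumulation of $\ell$-oldforms and thereby pins down the nearby forms as new, hence Steinberg, at $\ell$. Once that density statement is secured, the remainder is formal: smoothness of $\C^{0,full}(\ell)$ at $x$ is incompatible with the coexistence of the two distinct components the conjecture demands.
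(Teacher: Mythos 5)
Your overall strategy is exactly the paper's: the statement is a conjecture that the paper refutes rather than proves, and the refutation hinges on Corollary \ref{E2fullsmatl} --- smoothness of $\C^{0,full}(\ell)$ at $x = E_{2}^{crit_{p},ord_{\ell}}$ means the local ring is a domain (a DVR), so only one irreducible component passes through $x$, while the conjecture would force two distinct components (one generically special, one generically principal series). Your verification that the component through $x$ is generically special, via the Selmer-group argument excluding accumulation of $\ell$-oldforms so that nearby classical points are newforms of level $\ell$, hence Steinberg at $\ell$, also matches the paper.

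The one step where your write-up is looser than the paper, and where the looseness matters, is the claim that $N_{x}=0$ because the semisimplification $1 \oplus \omega_{p}^{-1}$ is unramified at $\ell$. The monodromy operator of the Weil--Deligne representation attached to a point of the eigencurve is \emph{not} determined by the semisimplification: since $x$ lies in the reducible locus $R$, the associated Weil--Deligne representation is not unique, and the paper shows (via Corollary \ref{2rep}) that the two lattice representations $\rho_{1} \in Ext(1,\omega_{p}^{-1})$ and $\rho_{2} \in Ext(\omega_{p}^{-1},1)$ behave differently at $\ell$: for $\rho_{1}$ one gets $N_{x}=0$, because $H^{1}(G_{\Q_{\ell}},\omega_{p}^{-1})=0$ forces $\rho_{1}|_{G_{\Q_{\ell}}}$ to split, whereas $\rho_{2}|_{G_{\Q_{\ell}}}$ is a nontrivial extension of $\omega_{p}^{-1}$ by the trivial character, so $N_{x} \neq 0$ for that choice. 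Your conclusion survives because the conjecture as stated only requires \emph{some} $x$ with $N_{x}=0$, and the $\rho_{1}$-choice supplies it; but as justified, your hypothesis check would be false for the $\rho_{2}$-choice. This non-uniqueness is not a pedantic point: it is precisely what leads the paper to propose reformulating the conjecture with ``$N_{x}=0$ for all Weil--Deligne representations associated to $x$,'' so a complete refutation should route the $N_{x}=0$ check through Corollary \ref{2rep} and the vanishing of $H^{1}(G_{\Q_{\ell}},\omega_{p}^{-1})$ rather than through the semisimplification.
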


Let $x = E_{2}^{crit_{p},ord_{l}}$, then by Corollary \ref{E2fullsmatl}, $\C^{0,full}(l)$ is smooth at $x$. Moreover all but finitely many classical points in the component containing $x$ are cusp forms new at $l$. So the component is generically special at $l$. Let $A = \OO_{x}$. By corollary \ref{2rep}, there are two representations $\rho_{1}, \rho_{2}: G_{\Q,{lp}} \to GL_{2}(A)$, such that $\bar{\rho_{i}}^{ss} = 1+ \omega_{p}^{-1}$ for $i =1,2$. Hence we can associated two WD representations to $x$, one for $\rho_{1}$ and one from $\rho_{2}$. We remark that this happens because $x \in R$.\\

 If $\rho_{1} \in Ext(1, \omega_{p}^{-1})$, then $\rho_{1|G_{\Q_{l}}} = 1 \oplus \omega_{p}^{-1}$, as $H^{1}(G_{\Q_{l}}, \omega_{p}^{-1}) = 0$. Hence $N_{x} = 0$ in this case. So even if the point $x=E_{2}^{crit_{p},ord_{l}}$ lies in a generically special component (at $l$) and $N_{x} = 0$, the point is smooth in the eigencurve. This is in contrast with Paulin's conjecture, which predicts the point $x$ will be non-smooth in the eigencurve. \\

But if we look at the other representation, $\rho_{2} \in Ext(\omega_{p}^{-1},1)$, then $\rho_{2|G_{\Q_{l}}}$ is a nontrivial extension of  $\omega_{p}^{-1}$ by the trivial character, hence $N_{x} \neq 0$. So probably the correct way to formulate the conjecture would be $N_{x} = 0$ for all the associated Weil-Deligne representaion to $x$. In light of Paullin's conjecture, we ask the following question: Let $Z \subset \C^{0,full}(N)$ be a generically special (at $l$) component such that there exists an $x \in Z$ such that $N_{x} = 0$ for all possible Weil-Deligne representation associated to $x$ at $l$, then does there exists $Z^{\prime} \subset \C^{0,full}(N)$, generically principal series (at $l$) irreducible component, such that $x \in Z^{\prime}$?

\subsection*{Acknowledgement}
I would like to thank Jo\"{e}l Bella\"\i che for suggesting the problem and a lot of helpful comments and ideas. Let me take this opportunity to thank him for his time and beautiful insights of the subject that he gave me. I would also like to thank Bala Sury for his comments and suggestions. I would also like to thank the referee for carefully reviewing my work and his valuable comments and suggestions. His comments have helped me in improving the manuscript. This work was done by the author as part of his PhD thesis in Brandeis University.

\bibliographystyle{amsplain}
\bibliography{e2crit}

\end{document}